\definecolor{darkblue}{rgb}{0,0,0.9}
\definecolor{lightblue}{rgb}{.5,.5,.9}
\numberwithin{equation}{section}
\newcommand\C{\mathbb{C}}
\newcommand\R{\mathbb{R}}
\newcommand\Z{\mathbb{Z}}
\theoremstyle{plain}
\newtheorem{theorem}{Theorem}[section]
\newtheorem{lemma}[theorem]{Lemma}
\newtheorem{corollary}[theorem]{Corollary}
\newtheorem{proposition}[theorem]{Proposition}
\theoremstyle{definition}
\newtheorem{example}[theorem]{Example}
\theoremstyle{remark}
\newtheorem{remark}[theorem]{Remark}
\let\uppercasenonmath\@gobble% disables title uppercase
\title[Stable almost complex structures on certain $10$-manifolds]{Stable almost complex structures on certain $10$-manifolds}
\author[Huijun Yang]{Huijun Yang}
\address{School of Mathematics and Statistics, Henan University, Kaifeng 475004, Henan, China}
\email{yhj@amss.ac.cn}
\thanks{The author is partially supported by the National Natural Science Foundation of China (Grant No.11301145) and the China Scholarship Council (File No. 201708410052). }
\subjclass[2010]{53C15, 57R20, 55S35}
\keywords{Stable almost complex structure, obstructions, Differential Riemann-Roch theorem, Spin characteristic classes}
\date{\today}
\begin{document}

\maketitle

%\begin{abstract}
%\end{abstract}

%\title[Stable almost complex structures on certain $10$-manifolds]{Stable almost complex structures on certain $10$-manifolds}

%\author[Huijun Yang]{Huijun Yang}
%\address{School of Mathematics and Statistics, Henan University, Kaifeng 475004, Henan, China}
%\email{yhj@amss.ac.cn}

\begin{abstract}
Let $M$ be a $10$-dimensional closed oriented smooth manifold. Set
$$\mathcal{D}_{M} := \{ x \in H^{2}(M; \Z/2) \mid x^{2} + w_{2}(M) x \in \rho_{2} ( TH^{4}(M;\Z) ) \}.$$
Suppose that $H_{1}(M;\Z)=0$ and $\mathcal{D}_{M} \subset \rho_{2}( H^{2}(M; \Z) )$. 
Then the necessary and sufficient conditions for $M$ to admit a stable almost complex structure are determined in terms of the characteristic classes and cohomology ring of $M$.
\end{abstract}

%\subjclass[2010]{53C15, 57R20, 55S35}
%\keywords{Stable almost complex structure, obstructions, Differential Riemann-Roch theorem, Spin characteristic classes}
%\date{abd}

%\maketitle

%\date{abd}
%\maketitle

\section{Introduction} 
\label{s:intro}
%%%%%%%%%%%%%%%%%%%%%%%%%%%%%%%%%%%%%%%%%%%%%%%

%First we introduce some notations. For a topological space $X$, let $\widetilde{K}(X)$ (resp. $\widetilde{KO}(X)$) be the reduced $KU$-group (resp. reduced $KO$-group) of X, which is the set of stable equivalent classes of complex (resp. real) vector bundles over $X$. For a real (resp. complex) vector bunlde $\xi$ (resp. $\eta$), we will denote by $\tilde{\xi}\in \widetilde{KO}(X)$ (resp. $\tilde{\eta}\in\widetilde{K}(X)$) the stable class of $\xi$ (resp. $\eta$), $w_{i}(\xi)$ (resp. $p_{i}(\xi)$) the $i$-th Stiefel-Whitney class (resp. Pontrjagin class) of $\xi$, $c_{i}(\eta)$ the $i$-th Chern class of $\eta$, $ch(\tilde{\eta})$ the Chern character of $\tilde{\eta}$. In particular, if $X$ is a smooth manifold, then $w_{i}(X)=w_{i}(TX)$ (resp. $p_{i}(X)=p_{i}(TX)$) is the $i$-th Stiefel-Whitney class (resp. Pontrjagin class) of $X$, where $TX$ is the tangent bundle of $X$.

%Let $r_{X}\colon \widetilde{K}(X)\rightarrow\widetilde{KO}(X)$ be the real reduction homomorphism. We say that a real vector bundle $\xi$ admits a \emph{stable complex structure} over $X$ if there exists a complex vector bundle $\eta$ over $X$ such that $r_{X}(\tilde{\eta})=\tilde{\xi}$, that is $\tilde{\xi}\in \mathrm{Im}~r_{X}$.  In particular, let $M$ be an $n$-dimensional closed oriented smooth manifold. We say that $M$ admits a \emph{ stable almost complex structure} if its tangent bundle $TM$ admits a stable complex structure. 

For a topological space $X$. 
Let $\xi$ be a real vector bundle over $X$.% and $\epsilon^{k}$ be the $k$-dimensional trivial real vector bundle over $X$. 
We say that $\xi$ admits a stable complex structure, 
if there exists a complex vector bundle $\eta$ over $X$ such that the underlying real vector bundle $\eta_{\R}$ is stably isomorphic to $\xi$.
%In particular, we say that $\eta$ admits a stable complex structure, if $\eta \oplus \epsilon^{k}$ admits a complex structure for some $k$.
For a manifold $M$, we say that $M$ admits a stable almost complex structure, if its tangent bundle $TM$ admits a stable complex structure.

%Throughout this paper, all the manifolds are closed, connected, oriented and smooth.

It is a classical topic in geometry and topology to determine the necessary and sufficient conditions (in terms of the characteristic classes) for a $n$-manifold $M$ to admit a stable almost complex structure (see for instance \cite{wu,eh,ma,thomas,heaps,gms,tang,young12, young15} and so on).
These are only known in the case $n\le8$ (cf. \cite{wu, eh, heaps}, etc.). Suppose that $n=10$. Then these conditions for $M$ to admit a stable almost complex structure are determined by Thomas in the case $H_{1}(M;\Z/2)=0$ and $w_{4}(M)=0$ (cf. \cite[Theorem 1.6]{thomas} ). Moreover, these conditions are got by Dessai \cite[Theorems 1.2]{dessai} in the case $H_{1}(M;\Z)=0$ and $H_{i}(M;\Z), i=2,3$ has no $2$-torsion. 

%Throughout this paper, all the manifolds are closed, connected, oriented and smooth.
In this paper, our main results are stated as follows.

From now on, $M$ will be a closed oriented smooth $10$-manifold.
% with $H_{1}(M;\Z)=0$ and no $2$-torsions in $H_{2}(M;\Z)$.
We will denote by 
\begin{align}\label{eq:beta}
\cdots\rightarrow H^{i}(M:\Z)\xrightarrow{\rho_{2}}H^{i}(M;\Z/2)\xrightarrow{\beta}H^{i+1}(M;\Z)\rightarrow\cdots
\end{align}
the long exact Bockstein sequence associated to the coefficient sequence
\[0\rightarrow \mathbb{Z}\xrightarrow{\times2}\mathbb{Z}\rightarrow\mathbb{Z}/2\rightarrow0,\]
where  
$\rho_{2}$ and $\beta$ are
the mod $2$ reduction and
Bockstein homomorphism respectively. Denote by $w_{i}(M)$ the $i$-th Stiefel-Whitney class of $M$.
Let $TH^{i}(M; \Z)$ be the torsion subgroup of $H^{i}(M; \Z)$. Set 
\[ \mathcal{D}_{M} := \{ x \in H^{2}(M; \Z/2) \mid x^{2} + w_{2}(M) x \in \rho_{2} ( TH^{4}(M;\Z) ) \}.\]
%Here $w_{2}(M)$ is the second Stiefel-Whitney class. 
One may find that $\mathcal{D}_{M}$ is a subgroup of $H^{2}(M;\Z/2)$ and $w_{2}(M) \in \mathcal{D}_{M}$.

Suppose that $\mathcal{D}_{M} \subset \rho_{2} ( H^{2}(M; \Z) )$. That is, for any $x \in \mathcal{D}_{M}$, 
there exist $\tilde{x} \in H^{2}(M; \Z)$ such that $\rho_{2}(\tilde{x}) = x$.
In particular, there exists $c \in H^{2}(M; \Z)$ such that $\rho_{2}(c) = w_{2}(M)$, i.e., $M$ is spin$^{c}$. 
Hence, for any $x \in \mathcal{D}_{M}$, there exists $z_{c, \tilde{x}} \in H^{4}(M; \Z)$ and $t_{c, \tilde{x}} \in TH^{4}(M; \Z)$ such that 
%\begin{equation*}\label{eq:zt}
\[ \tilde{x}^{2} + c \tilde{x} = 2z_{c,\tilde{x}} + t_{c,\tilde{x}} \]
%\end{equation*}
by the definition of $\mathcal{D}_{M}$.

\begin{theorem}\label{thm:main}
Let $M$ be a $10$-manifold with $H_{1}(M; \Z) = 0$ and $\mathcal{D}_{M} \subset \rho_{2} ( H^{2}(M; \Z) )$. Then $M$ admits a stable almost complex structure if and only if $\beta (w_{6}(M))=0$ and 
\begin{equation}\label{eq:sacs}
w_{4}^{2}(M)\cdot x = \rho_{2}(z_{c, \tilde{x}}) \cdot w_{6}(M)
\end{equation}
holds for every $x \in \mathcal{D}_{M}$.
\end{theorem}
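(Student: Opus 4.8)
The plan is to recast the problem as an obstruction-theoretic lifting problem and to let the Differential Riemann--Roch theorem carry the decisive characteristic-class computation at the top. A stable almost complex structure on $M$ is precisely a lift of the classifying map $\tau\colon M\to BSO$ of the stable tangent bundle through the realification fibration $SO/U\to BU\xrightarrow{r}BSO$. Since $\dim M=10$, only the homotopy groups $\pi_i(SO/U)$ for $i\le10$ matter; by Bott periodicity these are $0,\Z,0,0,0,\Z,\Z/2,\Z/2,0,\Z$ for $i=1,\dots,10$, so the successive obstructions lie in $H^{3}(M;\Z)$, $H^{7}(M;\Z)$, $H^{8}(M;\Z/2)$, $H^{9}(M;\Z/2)$ and $H^{11}(M;\Z)$. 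Two of these vanish for trivial reasons: $H^{11}(M;\Z)=0$ since $\dim M=10$, and $H^{9}(M;\Z/2)\cong H_{1}(M;\Z/2)=0$ by Poincar\'e duality and $H_{1}(M;\Z)=0$. The primary obstruction in $H^{3}(M;\Z)$ is $W_{3}(M)=\beta w_{2}(M)$, which vanishes because $M$ is Spin$^{c}$ by Lemma~\ref{lem:rho}. Thus a lift exists over the $6$-skeleton, and only the obstructions in $H^{7}(M;\Z)$ and $H^{8}(M;\Z/2)$ remain.

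The next step is to identify the $H^{7}$-obstruction with the integral Stiefel--Whitney class $W_{7}(M)=\beta w_{6}(M)$ — this is forced, up to lower-degree corrections which vanish on a Spin$^{c}$ manifold, since the relevant $k$-invariant of $SO/U$ reduces mod $2$ to $w_{7}(M)=\mathrm{Sq}^{1}w_{6}(M)$ — and to check that it does not depend on the chosen Spin$^{c}$ structure (there being no nonzero integral operation $H^{2}\to H^{7}$). Hence a lift over the $7$-skeleton exists iff $\beta w_{6}(M)=0$, and then such lifts form a torsor over $H^{6}(M;\Z)$, while the choice already made over the $3$-skeleton ranges over the torsor of Spin$^{c}$ structures, i.e.\ over $H^{2}(M;\Z)$.

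Assuming $\beta w_{6}(M)=0$, the final obstruction $\mathfrak o_{8}\in H^{8}(M;\Z/2)$ is a secondary obstruction: altering the $7$-skeleton lift by $v\in H^{6}(M;\Z)$ changes $\mathfrak o_{8}$ by $\mathrm{Sq}^{2}\rho_{2}(v)$, and altering the Spin$^{c}$ class $c$ changes $\mathfrak o_{8}$ by an explicit term quadratic in the change. A full lift exists iff some admissible choice makes $\mathfrak o_{8}=0$; by the perfect Poincar\'e pairing $H^{8}(M;\Z/2)\times H^{2}(M;\Z/2)\to H^{10}(M;\Z/2)=\Z/2$ and Lemma~\ref{lem:dm} (the annihilator of $\mathrm{Sq}^{2}\rho_{2}H^{6}(M;\Z)$ is $\rho_{2}(\mathcal D(M))$), this is equivalent to $\mathfrak o_{8}\cdot\rho_{2}(x)=0$ for every $x\in\mathcal D(M)$, after optimising over the Spin$^{c}$ structure. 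To evaluate $\mathfrak o_{8}\cdot\rho_{2}(x)$ one invokes the Differential Riemann--Roch theorem: the hypothetical Chern classes of a complex refinement of $TM$ are pinned down by the Pontryagin classes of $M$ and by Todd-genus integrality, and running these constraints through the obstruction yields $\mathfrak o_{8}\cdot\rho_{2}(x)=w_{4}^{2}(M)\cdot\rho_{2}(x)+\rho_{2}(z_{x})\cdot w_{6}(M)$, the second summand recording the dependence on $c$ through $x^{2}+cx=2z_{x}+t_{x}$; here $\beta w_{6}(M)=0$ and the freeness of $H^{10}(M;\Z)$ make $\rho_{2}(z_{x})\cdot w_{6}(M)$ well defined. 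This reproduces exactly the criterion \eqref{eq:sacs}.

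I expect this last step to be the main obstacle: translating the abstract secondary obstruction $\mathfrak o_{8}$ and its variation with the Spin$^{c}$ structure into the concrete classes $w_{4}^{2}(M)$ and $\rho_{2}(z_{x})\cdot w_{6}(M)$, which is where the Differential Riemann--Roch theorem and the spin characteristic classes do the real work. One must also verify that no obstruction at an earlier stage re-enters for the non-optimal choices, and keep careful track of how the hypotheses $H_{1}(M;\Z)=0$ and the absence of $2$-torsion in $H_{2}(M;\Z)$ are precisely what force $\beta\colon H^{2}(M;\Z/2)\to H^{3}(M;\Z)$ to vanish, make $\rho_{2}$ surjective on $H^{2}$, and render $\mathcal D(M)$ independent of $c$.
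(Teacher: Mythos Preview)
Your proposal is correct and follows essentially the same route as the paper: obstruction theory reduces the question to $\beta w_{6}(M)=0$ plus a secondary class in $H^{8}(M;\Z/2)/\mathrm{Sq}^{2}\rho_{2}H^{6}(M;\Z)$, Lemma~\ref{lem:dm} dualizes this to a pairing against $\rho_{2}(\mathcal D(M))$, and the Differential Riemann--Roch theorem together with Spin characteristic classes computes that pairing as $w_{4}^{2}(M)\cdot\rho_{2}(x)+\rho_{2}(z_{x})\cdot w_{6}(M)$. The paper carries out the Riemann--Roch step first for an arbitrary bundle $\xi$ (Theorem~\ref{thm:csxi}) and then specializes to $\xi=TM$, where Wu-class identities such as $w_{2}(M)w_{4}(M)=0$ and $w_{8}(M)=w_{4}^{2}(M)+w_{2}^{4}(M)$ force $A_{c,\tau}(d,x)\equiv 0\pmod 2$ for every admissible $d$ (Lemma~\ref{lem:ac}), so the optimization over the Spin$^{c}$ class you anticipate turns out to be unnecessary.
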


\begin{remark} %\label{rem:zcx}
It follows from the proof of Theorem \ref{thm:main}, Lemma \ref{lem:ac} and Remark \ref{rem:acxiw} in section \ref{s:proof} that the value $\rho_{2}(z_{c, \tilde{x}}) \cdot w_{6}(M)$ does not depend on the choice of $c$ and $\tilde{x}$ and the selection of $z_{c, \tilde{x}}$.
%One may find that $z_{x}$ and $t_{x}$ are depending on $x$ but not uniquely determined. However, the  equation \eqref{eq:sacs} is not depend on the choice of $z_{x}$ because the following reasons. Note that $\beta{w_{6}(M)}=0$ implies that there exists an element $u\in H^{6}(M;\Z)$ such that $\rho_{2}(u)=w_{6}(M)$ (by the Bockstein sequence \eqref{eq:beta}). Suppose that there are another $z_{x}^{\prime}$ and $t_{x}^{\prime}$ which also satisfying that $x^{2}+cx=2z_{x}^{\prime}+t_{x}^{\prime}$. Then $z_{x}-z_{x}^{\prime}$ must be a torsion class. Hence $\rho_{2}(z_{x}-z_{x}^{\prime})\cdot w_{6}(M)=\rho_{2}((z_{x}-z_{x}^{\prime})\cdot u)=0$. 
\end{remark}

\begin{remark}
For a $10$-manifold $M$, if $H_{2}(M; \Z)$ contains no $2$-torsion, then $\mathcal{D}_{M} \subset \rho_{2}(H^{2}(M; \Z))$. 
That is because, if $H_{2}(M; \Z)$ contains no $2$-torsion, then so is $H^{3}(M; \Z)$ by the universal coefficient theorem. 
Therefore, it follows from the definition of $\mathcal{D}_{M}$ and the Bockstein sequence \eqref{eq:beta}  that $H^{2}(M; \Z/2) = \rho_{2}(H^{2}(M; \Z))$. 
Thus the claim is proved.
\end{remark}

\begin{remark}
Let $M$ be a $10$-manifold with $H_{1}(M; \Z)=0$ and no $2$-torsion in $H_{i}(M; \Z)$ for $i = 2, ~3$. 
Then $H^{4}(M; \Z)$ contains no $2$-torsion by the universal coefficient theorem. 
Hence $\rho_{2} ( TH^{4}(M; \Z) ) = 0$ and $\mathcal{D}_{M} = \rho_{2} ( \mathcal{D}(M) )$, 
where $\mathcal{D}(M)$ is the subgroup of $H^{2}(M; \Z)$ defined by Dessai in \cite{dessai} (cf. \cite[Difinition 1.1]{dessai}). 
Moreover, under these conditions, one may find from the proof of Theorem \ref{thm:main} that the equation \eqref{eq:sacs} is just the simplification of the congruence $(1.2)$ in \cite{dessai}.
\end{remark}

%\begin{remark}
%By the definition of $\mathfrak{D}(M)$, we only need to check the equation \eqref{eq:sacs} for the generators of $\mathfrak{D}(M)$.
%\end{remark}

\begin{remark}
There does exist spin $10$-manifold $M$ for which $\beta (w_{6}(M)) \neq0$ (cf. Diaconescu et al. \cite{dmw02}). However, I do not know of an example for which satisfying that  $H_{1}(M;\Z)=0$ and $\mathcal{D}_{M} \subset \rho_{2} ( H^{2}(M; \Z) )$ as well.
\end{remark}

As applications, note that $\mathcal{D}_{M}$ is the annihilator of $\mathrm{Sq}^{2}\rho_{2} (H^{6}(M;\Z))$ (see Lemma \ref{lem:dm} below) and $w_{6}(M) = \mathrm{Sq}^{2}w_{4}(M)$ (see the identity \eqref{eq:w6} below), one can get

\begin{corollary}\label{coro:dm0}
Let $M$ be a $10$-manifold with $H_{1}(M; \Z) = 0$. Suppose that $\mathcal{D}_{M} \subset \rho_{2}(TH^{2}(M; \Z))$. Then $M$ admits a stable almost complex structure if and only if $\beta (w_{6}(M))=0$.
\end{corollary}

\begin{corollary}\label{coro:H4t}
Let $M$ be as in Theorem \ref{thm:main}. Suppose that $H^{4}(M; \Z)$ is a torsion group. 
Then $M$ admits a stable almost complex structure if and only if $\beta(w_{6}(M)) = 0$.% and
%\begin{equation}\label{eq:w6t}
%w_{4}^{2}(M)\in \mathrm{Sq}^{2}\rho_{2}H^{6}(M;\Z).
%\end{equation}
\end{corollary}

\begin{corollary}\label{coro:w6t}
Let $M$ be as in Theorem \ref{thm:main}. Suppose that $w_{6}(M) \in \rho_{2} ( TH^{6}(M;\Z) )$. 
Then $M$ admits a stable almost complex structure if and only if 
\begin{equation*}\label{eq:w6t}
w_{4}^{2}(M)\in \mathrm{Sq}^{2}\rho_{2} ( H^{6}(M;\Z) ).
\end{equation*}
\end{corollary}

%\begin{remark}
%Since $M$ is Spin$^{c}$, under the assumption that $\beta w_{6}(M)=0$, it must admits a stable almost complex structure $\eta$ over the $7$-skeleton $M^{7}$ of $M$ (see Lemma \ref{lem:o90} below). Denote by  $\mathfrak{o}_{8}(\eta)$ the obstruction to extending $\eta$ over $M^{8}$. This obstruction has been studied by Thomas \cite[Theorem 1.2]{thomas}. It lives in $H^{8}(M;\Z/2)/\mathrm{Sq}^{2}\rho_{2}H^{6}(M;\Z)$, and can be expressed in terms of a secondary cohomology operation $\Omega$ as (combining \cite[Theorem 1.2]{thomas}  with the identities \eqref{eq:w8} and \eqref{eq:w24} below):
%\begin{equation*}
%\Omega(\frac{c^{2}-P_{1}(M)}{2})\in H^{8}(M;\Z/2)/\mathrm{Sq}^{2}\rho_{2}H^{6}(M;\Z).
%\end{equation*}
%Theorems \ref{thm:main} and \ref{thm:w6t} give this secondary operation a representation in terms of the cohomology and Characteristic classes of $M$. In particular, Theorem \ref{thm:w6t} tell us that 
%\[\Omega(\frac{c^{2}-P_{1}(M)}{2})=\{w_{4}^{2}(M)\}\in H^{8}(M;\Z/2)/\mathrm{Sq}^{2}\rho_{2}H^{6}(M;\Z)\]
% if there exists a torsion class $t\in H^{6}(M;\Z)$ such that $\rho_{2}t=w_{6}(M)$.
%\end{remark}

%we can get the following corollary immediately by the identity \eqref{eq:w6} below.

\begin{corollary}\label{coro:w40}
Let $M$ be as in Theorem \ref{thm:main}. Suppose that $w_{4}(M)=0$. Then $M$ always admits a stable almost complex structure.
\end{corollary}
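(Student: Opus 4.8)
The strategy is to deduce the corollary directly from Theorem~\ref{thm:main} by checking that the hypothesis $w_{4}(M)=0$ makes both of its conditions hold automatically. The one external ingredient I will invoke is the identity~\eqref{eq:w6}; for a closed oriented manifold this is the Wu-formula relation $w_{6}(M)=\mathrm{Sq}^{2}w_{4}(M)+w_{2}(M)\,w_{4}(M)$ (the remaining term $2w_{1}(M)w_{5}(M)$ in $\mathrm{Sq}^{2}w_{4}$ vanishing mod~$2$), so that $w_{6}(M)$ is a sum of terms each of which is a cup-product with $w_{4}(M)$.

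First I would set $w_{4}(M)=0$ in \eqref{eq:w6} to obtain $w_{6}(M)=0$. Consequently $\beta w_{6}(M)=\beta(0)=0$, which is the first of the two conditions in Theorem~\ref{thm:main}.

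Next I would verify \eqref{eq:sacs}. For every $x\in\mathcal{D}(M)$ the left-hand side is $w_{4}^{2}(M)\cdot\rho_{2}(x)=0$ since $w_{4}(M)=0$, and the right-hand side is $\rho_{2}(z_{x})\cdot w_{6}(M)=0$ since $w_{6}(M)=0$; hence \eqref{eq:sacs} holds for all $x\in\mathcal{D}(M)$. Both hypotheses of Theorem~\ref{thm:main} being satisfied, $M$ admits a stable almost complex structure.

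There is essentially no obstacle: once \eqref{eq:w6} is available the corollary is a one-line specialization of Theorem~\ref{thm:main}. The only point worth a second's thought is making sure that \eqref{eq:w6} leaves no degree-$6$ summand in $w_{6}(M)$ that is independent of $w_{4}(M)$ — but that is precisely the content of the Wu formula applied to $w_{4}$.
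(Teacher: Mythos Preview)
Your proposal is correct and matches the paper's own approach: invoke \eqref{eq:w6} to deduce $w_6(M)=0$ from $w_4(M)=0$, after which both conditions of Theorem~\ref{thm:main} hold trivially. One minor discrepancy worth noting is that the paper's \eqref{eq:w6} actually reads $w_6(M)=\mathrm{Sq}^2 w_4(M)$ with no additional $w_2(M)\,w_4(M)$ term (that term vanishes separately by \eqref{eq:w24}), but this is immaterial here since either form yields $w_6(M)=0$ under the hypothesis $w_4(M)=0$.
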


\begin{remark}
This corollary can also be regard as a corollary of Thomas \cite[Theorem 1.6]{thomas}.
\end{remark}

The proofs of Corollaries \ref{coro:dm0}$-$\ref{coro:w40} are left to the reader.

%%For $M$ which has "nice" cohomology (such as the one of complete intersections), we have
%\begin{corollary}\label{coro:h}
%Let $M$ be as in Theorem \ref{thm:main}. 
%Assume in addition that $H^{2}(M;\Z)$ is generated by $h$ and $\rho_{2}(h^{2}) \notin \rho_{2}(TH^{4}(M; \Z))$. 
%Then $M$ always admits a stable complex structure.
%\end{corollary}
%
%
%\begin{remark}
%Let $M$ be a $10$-manifold with $H_{1}(M;\Z)=0$ and no $2$-torsion in $H_{i}(M;\Z)$ for $i=2,~3$. 
%Suppose that $H^{2}(M;\Z)$ is generated by $h$ and $\rho_{2}(h^{2})\neq0$. 
%Then Dessai has proved in \cite[Corollary 1.3]{dessai} that $M$ admits a stable almost complex structure if and only if  $w_{2}(M)w_{4}^{2}(M)=0$. 
%In fact, we may see from identity \eqref{eq:w24} that $w_{2}(M)w_{4}^{2}(M)$ is always zero.
%\end{remark}

In fact, in this paper, we do more than above. In section \ref{s:obs}, for any real vector bundle $\xi$ over $M$, the obstructions for $\xi$ to admit a stable complex structure are investigated. Based on the results in section \ref{s:obs}, the necessary and sufficient conditions for $\xi$ to admit a stable complex structure are determined in section \ref{s:proof}.  As an application, with a carefully analyzing  of the Stiefel-Whitney classes of $M$, Theorem \ref{thm:main} is obtained.

% {s:intro}
%%%%%%%%%%%%%%%%%%%%%%%%%%%%%%%%%%%%%%%%%%%%%%%

\section{The Obstructions}  \label{s:obs}

%%%%%%%%%%%%%%%%%%%%%%%%%%%%%%%%%%%%%%%%%%%%%%%
In this section, $M$ will be a $10$-manifold with $H_{1}(M; \Z) = 0$.
For real vector bundle $\xi$ over $M$, the obstructions for $\xi$ to admit a stable complex structure are investigated.

Let $\mathrm{U}$ (resp.~$\mathrm{SO}$) be the stable unitary (resp. special orthogonal) group. For a pathwise connected $CW$-complex $X$, denote by $X^{(q)}$ the $q$-skeleton of $X$.
Let $\xi$ be a orientable real vector bundle over $X$. Suppose that $\xi$ admits a stable complex structure $\eta$ over $X^{(q)}$.
Then the obstruction $\mathfrak{o}_{q+1}(\eta)$ to extending $\eta$ over $X^{(q+1)}$ lives in $H^{q+1}(X,\pi_{q}(\mathrm{SO/U}))$
where (cf. Bott \cite{bo} or Massey \cite[p.560]{ma})
\begin{equation*}\pi_{q}(\mathrm{SO/U})=
\begin{cases}
\mathbb{Z}, & q\equiv 2\mod{4},\\
\mathbb{Z}/2, & q\equiv 0,~-1\mod{8},\\
0, & otherwise.
\end{cases}
\end{equation*}
It follows from Massey \cite[Theorem I]{ma} that
\begin{lemma}\label{lem:o37}
$\mathfrak{o}_{3}(\eta)=\beta (w_{2}(\xi))$, $\mathfrak{o}_{7}(\eta)=\beta (w_{6}(\xi))$.
\end{lemma}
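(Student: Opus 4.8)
The plan is to read off both formulas from Massey's analysis of the fibre $\mathrm{SO/U}$ in \cite{ma} (his Theorem~I); I describe the shape of that argument so as to locate where the real content lies. Recall first that a stable complex structure on an oriented bundle $\xi$ over a subcomplex $Y\subseteq X$ is the same as a section over $Y$ of the fibre bundle with fibre $\mathrm{SO/U}$ obtained by pulling back the universal fibration $\mathrm{SO/U}\to BU\to BSO$ along a classifying map of $\xi$. Thus the given datum $\tilde{\eta}$ is such a section over $X^{q}$, and $\mathfrak{o}_{q+1}(\tilde{\eta})\in H^{q+1}(X;\pi_{q}(\mathrm{SO/U}))$ is the obstruction to extending it over $X^{q+1}$; it is natural in the pair $(\xi,X)$.

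For the case $q=2$: since $\xi$ is orientable and $\pi_{1}(\mathrm{SO/U})=0$, the obstructions in degrees below $3$ vanish, so $\mathfrak{o}_{3}(\tilde{\eta})$ is the primary obstruction. Being primary it is a natural transformation from oriented bundles to $H^{3}(-;\Z)$, hence of the form $\xi\mapsto f^{*}\omega$ for a fixed $\omega\in H^{3}(BSO;\Z)$, where $f$ classifies $\xi$. Since $H^{3}(BSO;\Z)\cong\Z/2$ is generated by $\beta w_{2}$ of the universal bundle $\gamma$, we get $\mathfrak{o}_{3}(\tilde{\eta})=\varepsilon\,\beta w_{2}(\xi)$ with $\varepsilon\in\{0,1\}$. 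That $\varepsilon=1$ is the classical fact that $\beta w_{2}$ is the primary obstruction to a stable complex structure: if $\tilde{\xi}=r_{X}(\tilde{\eta}')$ for some complex $\eta'$ then $w_{2}(\xi)=\rho_{2}c_{1}(\tilde{\eta}')$, so $\beta w_{2}(\xi)=0$, whereas $\beta w_{2}(\gamma)\neq0$, so $\gamma$ is not stably complex over the $3$-skeleton of $BSO$ and $\mathfrak{o}_{3}(\gamma)\neq0$.

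For the case $q=6$: here $\pi_{3}(\mathrm{SO/U})=\pi_{4}(\mathrm{SO/U})=\pi_{5}(\mathrm{SO/U})=0$, so a section given over any $X^{q}$ with $q\ge6$ restricts to one over $X^{6}$, and $\mathfrak{o}_{7}(\tilde{\eta})\in H^{7}(X;\pi_{6}(\mathrm{SO/U}))=H^{7}(X;\Z)$. By naturality this class is again pulled back from a universal class, now defined for oriented bundles that admit a section over the $6$-skeleton, i.e. those with $\beta w_{2}=0$ (a condition forced by the existence of $\tilde{\eta}$). Massey's determination of the low-dimensional Postnikov $k$-invariants of $\mathrm{SO/U}$ (equivalently, of the low-degree structure of the pair $(BU,BSO)$) identifies this universal class with $\beta w_{6}$, uniformly in the choice of section over the intervening skeleta; hence $\mathfrak{o}_{7}(\tilde{\eta})=\beta w_{6}(\xi)$.

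The part that is not formal is exactly this last identification --- that the degree-$7$ obstruction is precisely $\beta w_{6}(\xi)$, and in particular is independent of the chosen lower-skeletal extension of $\tilde{\eta}$ --- which rests on the explicit cohomology and $k$-invariants of $\mathrm{SO/U}$ through dimension $7$. For this I would simply invoke \cite[Theorem~I]{ma}, exactly as the statement of the lemma does.
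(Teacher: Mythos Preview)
Your proposal is correct and matches the paper's approach: the paper does not give an independent proof of this lemma at all, but simply records it as a direct consequence of Massey \cite[Theorem~I]{ma}. Your write-up is a fuller exposition of why that citation suffices---a naturality argument pinning down $\mathfrak{o}_{3}$ in $H^{3}(BSO;\Z)\cong\Z/2\cdot\beta w_{2}$, followed by an appeal to Massey's Postnikov analysis of $\mathrm{SO/U}$ for the degree-$7$ class---so in substance it is the same argument, just unpacked.
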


Recall that for a spin vector bundle $\zeta$ over $X$, the spin characteristic classes $q_{i}(\zeta)\in H^{4i}(X;\Z)$ of $\zeta$ are defined (cf. Thomas \cite{thomas62}) and they satisfy the following relations with the Pontrjagin classes $p_{i}(\zeta)$ and Stifel-Whitney classes $w_{i}(\zeta)$ (cf. \cite[Theorem (1.2)]{thomas62}): 
\begin{align}
p_{1}(\zeta)&=2q_{1}(\zeta), &\rho_{2}(q_{1}(\zeta))=w_{4}(\zeta),\label{eq:q1}\\
p_{2}(\zeta)&=2q_{2}(\zeta)+q_{1}^{2}(\zeta), &\rho_{2}(q_{2}(\zeta))=w_{8}(\zeta).\label{eq:q2}
\end{align}
Moreover, given two spin vector bundles $\zeta_{1}$ and $\zeta_{2}$ over $X$, we have (cf. Thomas \cite[(1.10)]{thomas62})
\begin{align*}
q_{1}(\zeta_{1}\oplus \zeta_{2})&=q_{1}(\zeta_{1})+q_{1}(\zeta_{2}),\\
q_{2}(\zeta_{1}\oplus \zeta_{2})&=q_{2}(\zeta_{1})+q_{2}(\zeta_{2})+q_{1}(\zeta_{1})q_{1}(\zeta_{2}).
\end{align*}
In particular, suppose that $\zeta_{1}-\zeta_{2}$ is stably trivial over $X^{(7)}$, which implies that $q_{1}(\zeta_{1}-\zeta_{2})=0$, then:
\begin{align}
q_{1}(\zeta_{1})&=q_{1}(\zeta_{2}), \notag\\
q_{2}(\zeta_{1}-\zeta_{2})&=q_{2}(\zeta_{1})-q_{2}(\zeta_{2}). \label{eq:mq2}
\end{align}
Here and hereafter, $\zeta_{1} - \zeta_{2} = \zeta_{1} \oplus (- \zeta_{2})$ and $-\zeta_{2}$ is a vector bundle over $M$ such that $\zeta_{2} \oplus (-\zeta_{2}) $ is trivial.
Futhermore, suppose that $\eta$ is a complex vector bundle over $X$ with $c_{1}(\eta)=0$. Then the underlying real vector bundle $\eta_{\R}$ is spin and  it follows from the identities \eqref{eq:q1} and \eqref{eq:q2} that 
\begin{align}\label{eq:cq}
q_{1}(\eta_{\R}) = - c_{2}(\eta),~q_{2}(\eta_{\R}) = c_{4}(\eta).
\end{align}

\begin{proposition}\label{prop:spin}
Let $\zeta$ be a real vector bundle over $X$ which is stably trivial over $X^{(7)}$. Suppose that $q_{2}(\zeta)=0$. Then $\zeta$ must admits a stable complex structure over $X^{(8)}$. 
\end{proposition}

\begin{proof}
Denote by $\mathrm{BO}$ the classifying space of the stable orthogonal group, $\mathrm{BSpin}$ the classifying space of the stable spin group,
$\mathrm{BO}\langle k\rangle$ the ($k {-} 1$)-connected cover of $\mathrm{BO}$ and $p_{k}\colon \mathrm{BO}\langle k\rangle\rightarrow \mathrm{BO}$ the map which satisfies that $p_{k\ast}\colon \pi_{i}(\mathrm{BO}\langle k\rangle)\rightarrow \pi_{i}(\mathrm{BO})$ is isomorphic for $i\ge k$. It is known that $\mathrm{BSpin}=\mathrm{BO}\langle4\rangle$ and there is a map $p\colon \mathrm{BO}\langle8\rangle\rightarrow \mathrm{BSpin}$ such that $p_{4}\circ p=p_{8}$. Denote by $\gamma$ the universal bundle over $\mathrm{BO}$.

Let $\zeta\colon X\rightarrow \mathrm{BO}$ be the map such that $\zeta^{\ast}(\gamma)=\zeta$. 
Since $\zeta$ is stable trivial over $X^{7}$, there exists a map $\zeta^{\prime}\colon X\rightarrow \mathrm{BO}\langle 8\rangle$ such that $p_{8}\circ\zeta^{\prime}=\zeta$. 
Let
\begin{equation*}
q_{2}\colon \mathrm{BO}\langle8\rangle\rightarrow K(\Z;8)
\end{equation*}
be the map corresponding to $p^{\ast}(q_{2})\in H^{8}(\mathrm{BO}\langle8\rangle;\Z)$ where $q_{2}\in H^{8}(\mathrm{BSpin};\Z)$ is the second universal spin characteristic class, $K(\Z;8)$ is the Eilenberg-MacLane space. 
Denote by $F$ the fiber of the map $q_{2}$ and $i\colon F\rightarrow \mathrm{BO}\langle8\rangle$ the inclusion of the fiber. 
Then it follows easily from the relations \eqref{eq:q2} that $F$ is $6$-connected, $\pi_{7}(F)\cong\Z/3$ and $\pi_{8}(F)=0$. 
Therefore, $H^{8}(F;\Z/2)=0$ by \cite[Theorem 5.8]{hass}. Hence, according to the obstruction theory as above, $i^{\ast}p_{8}^{\ast}(\gamma)$ must admits a stable complex structures over the $8$-skeleton $F^{(8)}$ of $F$. 

Now $q_{2}(\zeta)=0$ implies that the map $\zeta^{\prime}$ can factor through $F$ via the inclusion $i$, hence $\zeta^{\prime}$ admits a stable complex structure, and so is $\zeta$. The proof is completed.
\end{proof}

%Let $M$ be a $10$-manifold with $H_{1}(M;\Z)=0$ and $\mathcal{D}_{M} \subset \rho_{2}(H^{2}(M; \Z))$. 

Let $\xi$ be a real vector bundle over $M$. Then $\xi$ must be orientable ( since $H_{1}(M;\Z)=0$) and
there are four obstructions $\mathfrak{o}_{3}(\eta)$, $\mathfrak{o}_{7}(\eta)$, $\mathfrak{o}_{8}(\eta)$ and $\mathfrak{o}_{9}(\eta)$ for $\xi$ to admit a stable complex structure. 
Since $H^{9}(M;\Z/2)\cong H_{1}(M;\Z/2)=0$, it follows from Lemma \ref{lem:o37} and the definition of the obstructions that
\begin{lemma}\label{lem:o90}
The real vector bundle $\xi$ admits a stable complex structure over $M^{(7)}$ if and only if $\beta(w_{2}(\xi) ) = 0$ and $\beta (w_{6}(\xi))=0$. Any stable complex structure $\eta$ of $\xi$ over $M^{(8)}$ can be extended to one over $M$.
\end{lemma}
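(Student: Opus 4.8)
The statement to prove is Lemma \ref{lem:o90}, which has two parts. The plan is to use obstruction theory systematically, tracking the four obstructions $\mathfrak{o}_{3}, \mathfrak{o}_{7}, \mathfrak{o}_{8}, \mathfrak{o}_{9}$ listed above and using the vanishing of certain cohomology groups of $M$.

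For the first assertion, I would proceed skeleton by skeleton. Since $\xi$ is orientable, there is no obstruction over $M^{2}$. The obstruction to extending over $M^{3}$ is $\mathfrak{o}_{3}(\tilde\eta) = \beta w_{2}(\xi) \in H^{3}(M;\Z)$ by Lemma \ref{lem:o37}; this vanishes automatically since $M$ has no $2$-torsion in $H^{3}(M;\Z)$ (as noted before Lemma \ref{lem:rho}, the relevant Bockstein is zero), so $\xi$ is Spin$^{c}$ and extends over $M^{3}$. By the table of homotopy groups $\pi_{q}(\mathrm{SO/U})$, the groups $\pi_{3}(\mathrm{SO/U})$, $\pi_{4}(\mathrm{SO/U})$, $\pi_{5}(\mathrm{SO/U})$ vanish (the only nonzero ones in this range are $\pi_{2}, \pi_{6}$ giving $\Z$, and $\pi_{7}, \pi_{8}$ giving $\Z/2$), so there is no obstruction to extending over $M^{4}, M^{5}, M^{6}$. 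The obstruction to extending over $M^{7}$ is $\mathfrak{o}_{7}(\tilde\eta) = \beta w_{6}(\xi) \in H^{7}(M;\Z)$, again by Lemma \ref{lem:o37}. Hence $\xi$ admits a stable complex structure over $M^{7}$ precisely when $\beta w_{6}(\xi) = 0$; and when it does, one must also check that no choice made at lower skeleta can obstruct — but since all the intervening obstruction groups vanish and the $\mathfrak{o}_{3}$ obstruction is identically zero, the condition $\beta w_{6}(\xi)=0$ is both necessary and sufficient.

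For the second assertion, suppose $\tilde\eta$ is a stable complex structure of $\xi$ over $M^{8}$. The only obstruction to extending $\tilde\eta$ over $M^{9}$ lies in $H^{9}(M;\pi_{8}(\mathrm{SO/U})) = H^{9}(M;\Z/2)$, and the only obstruction to extending further over $M^{10}=M$ lies in $H^{10}(M;\pi_{9}(\mathrm{SO/U})) = 0$ since $\pi_{9}(\mathrm{SO/U}) = 0$ (as $9 \not\equiv 2 \pmod 4$ and $9 \not\equiv 0, -1 \pmod 8$). So it remains to see that $H^{9}(M;\Z/2) = 0$. Since $H_{1}(M;\Z) = 0$, the universal coefficient theorem gives $H_{1}(M;\Z/2) = 0$, and then Poincaré duality for the closed oriented $10$-manifold $M$ yields $H^{9}(M;\Z/2) \cong H_{1}(M;\Z/2) = 0$. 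Therefore the $\mathfrak{o}_{9}$ obstruction vanishes, and $\tilde\eta$ extends over all of $M$.

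I expect the argument to be entirely routine: the main (and only mild) point is bookkeeping — correctly reading off the vanishing homotopy groups $\pi_{q}(\mathrm{SO/U})$ in the range $3 \le q \le 9$ from the displayed table, and invoking Poincaré duality to kill $H^{9}(M;\Z/2)$. There is no genuine obstacle here; the content is that on a manifold of this type only the single obstruction $\beta w_{6}(\xi)$ stands between a Spin$^{c}$ structure and a stable complex structure over the $7$-skeleton, and that nothing further obstructs passage from the $8$-skeleton to the whole manifold.
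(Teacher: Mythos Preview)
Your proposal is correct and follows exactly the same approach as the paper: the paper's justification is the paragraph immediately preceding the lemma, which records that the only obstructions are $\mathfrak{o}_{3},\mathfrak{o}_{7},\mathfrak{o}_{8},\mathfrak{o}_{9}$, that $\mathfrak{o}_{3}=\beta w_{2}(\xi)=0$ by Lemma~\ref{lem:rho}, and that $\mathfrak{o}_{9}$ lives in $H^{9}(M;\Z/2)\cong H_{1}(M;\Z/2)=0$. You have simply written out this obstruction-theoretic bookkeeping in full detail.
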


Furthermore, in the stable range, note that the only obstruction for an extension of a complex vector bundle over $M^{(7)}$ to $M$ lives in $H^{9}(M;\Z)$, we can get that
\begin{lemma}\label{lem:oc}
In the stable range, any complex vector bundle over $M^{(7)}$ can be extended to one over $M$.
\end{lemma}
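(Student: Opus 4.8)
The plan is to run the standard obstruction-theoretic argument for extending a classifying map into $\mathrm{BU}$ across the skeleta of $M$. A stable complex vector bundle over $M^{7}$ is represented by a homotopy class of maps $M^{7}\to\mathrm{BU}$, and extending the bundle over $M$ amounts to extending this map successively over $M^{8}$, $M^{9}$ and $M^{10}=M$. The obstruction to extending over $M^{k+1}$ lies in $H^{k+1}(M;\pi_{k}(\mathrm{BU}))$, so the first step is simply to record, via Bott periodicity, that $\pi_{k}(\mathrm{BU})=\Z$ for $k$ even and $\pi_{k}(\mathrm{BU})=0$ for $k$ odd.

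It then follows that the obstructions to extending over $M^{8}$ and over $M^{10}$ lie in $H^{8}(M;\pi_{7}(\mathrm{BU}))=0$ and $H^{10}(M;\pi_{9}(\mathrm{BU}))=0$ respectively, and hence vanish for trivial reasons. The only obstruction that could a priori be nonzero is the one to extending over $M^{9}$, which lies in $H^{9}(M;\pi_{8}(\mathrm{BU}))=H^{9}(M;\Z)$; this is exactly the group singled out just before the statement.

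It remains to see that this group is zero. Since $M$ is a closed oriented smooth $10$-manifold, Poincaré duality gives $H^{9}(M;\Z)\cong H_{1}(M;\Z)$, which vanishes by hypothesis. Therefore every obstruction in the extension problem is zero, and the given stable complex vector bundle over $M^{7}$ extends over all of $M$. There is no genuinely difficult step: the only points that require attention are the correct indexing in the obstruction sequence (so that the relevant coefficient groups are the odd-degree homotopy groups of $\mathrm{BU}$, all of which vanish) and the use of Poincaré duality to identify $H^{9}(M;\Z)$ with $H_{1}(M;\Z)=0$.
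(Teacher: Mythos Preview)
Your proof is correct and follows exactly the same approach as the paper: the paper simply notes that the only obstruction to extending a stable complex bundle from $M^{7}$ to $M$ lives in $H^{9}(M;\Z)$, which vanishes by Poincar\'e duality since $H_{1}(M;\Z)=0$. You have merely spelled out the intermediate steps (recording $\pi_{7}(\mathrm{BU})=\pi_{9}(\mathrm{BU})=0$ explicitly), but the argument is the same.
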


Now, the main result of this section can be stated as:

\begin{theorem}\label{thm:po8}
Let $\xi$ be a real vector bundle over $M$. %Suppose that $\beta(w_{2}(\xi) ) = 0$ and $\beta (w_{6}(\xi))=0$. 
Then $\xi$ admits a stable complex structure over $M$ if and only if  
there exists a complex vector bundle $\eta$ over $M$, such that $\eta_{\R} |_{M^{(7)}}$ is a stable complex structure of $\xi$ over $M^{(7)}$ and 
\begin{equation}\label{eq:po8w}
w_{8}(\eta) - w_{8}(\xi) \in \mathrm{Sq}^{2}\rho_{2}H^{6}(M;\Z).
\end{equation}
\end{theorem}

%\begin{remark}

%\end{remark}
\begin{remark}
This is a generalization of Dessai \cite[Lemma 1.7]{dessai}.
\end{remark}

\begin{remark}%\label{rem:spin}
If $\eta_{\R} |_{M^{(7)}}$ is a stable complex structure of $\xi$ over $M^{(7)}$, then $\eta_{\R} - \xi $ is stably trivial over $M^{(7)}$, hence a spin vector bundle.  
Therefore, the identities \eqref{eq:q2} implies that
\begin{equation}\label{eq:po8q}
w_{8}(\eta)-w_{8}(\xi) = w_{8}(\eta_{\R} - \xi) = \rho_{2} ( q_{2} ( \eta_{\R} - \xi) ).
\end{equation}
\end{remark}

\begin{proof}
One direction is trivial. 
So assume that there exists a complex vector bundle $\eta$ over $M$, such that $\eta_{\R} |_{M^{(7)}}$ is a stable complex structure of $\xi$ over $M^{(7)}$ and 
$w_{8}(\eta) - w_{8}(\xi) \in \mathrm{Sq}^{2}\rho_{2}H^{6}(M;\Z)$.

%$\beta(w_{2}(M) ) = 0$ and $\beta (w_{6}(\xi))=0$, it follows from Lemmas \ref{lem:o90} and \ref{lem:oc} that there always exists a stable complex vector bundle $\eta$ over $M$ such that $\eta$ is a stable complex structure of $\xi$ over $M^{(7)}$, i.e. $\eta_{\R} |_{M^{(7)}} = \xi |_{M^{(7)}}$.

Let $\zeta = \eta_{\R} - \xi$. It suffices to show that $\zeta$ admits a stable complex structure. Since $\zeta$ is stably trivial over $M^{(7)}$, which implies that $q_{1}(\zeta)=0$, there must exists $v\in H^{8}(M;\Z)$ such that $q_{2}(\zeta)=3v$ (cf. Duan \cite[Theorem 1]{dl91}). Now it follows easily from the condition \eqref{eq:po8w} and eqution \eqref{eq:po8q} that there is $u\in H^{6}(M;\Z)$ such that 
\[\mathrm{Sq}^{2}\rho_{2}(u)=\rho_{2}(q_{2}(\zeta))=\rho_{2}(v).\]
Then according to Adams \cite{ad61} (cf. Heaps \cite[p. 112]{heaps}), there exists a complex vector bundle $\gamma$ over $M$, trivial over $M^{(5)}$, such that $c_{3}(\gamma)=2u$ and $c_{4}(\gamma)=q_{2}(\zeta)$. Note that we have $\pi_{i}(\mathrm{BO})=0$ for $i=6,7$. 
Therefore, $\gamma_{\R}$ is stably trivial over $M^{(7)}$ and $q_{2}(\gamma_{\R}) = c_{4}(\gamma) = q_{2}(\zeta)$ by identity \eqref{eq:cq}. 

Now, it suffices to show that $\zeta - \gamma_{\R}$ admits a stable complex structure. Since $\zeta - \gamma_{\R}$ is stably trivial over $M^{(7)}$ and $q_{2}(\zeta - \gamma_{\R}) =0 $ by identity \eqref{eq:mq2}, it follows from Proposition \ref{prop:spin} that $\zeta - \gamma_{R}$ admits a stable complex structure over $M^{(8)}$ and hence over $M$ by Lemma \ref{lem:o90}. 
This completes the proof.
\end{proof}

% {sec:obs} 
%%%%%%%%%%%%%%%%%%%%%%%%%%%%%%%%%%%%%%%%%%%%%%%

\section{The proof of the main results}  \label{s:proof}
%%%%%%%%%%%%%%%%%%%%%%%%%%%%%%%%%%%%%%%%%%%%%%%
In this section, $M$ will be a $10$-manifold with $H_{1}(M;\Z)=0$ and $\mathcal{D}_{M} \subset \rho_{2}(H^{2}(M; \Z))$.
For a real vector bundle $\xi$ over $M$, based on Theorem \ref{thm:po8}, the necessary and sufficient conditions for $\xi$ to admit a stable complex structure are given in terms of the characteristic classes of $\xi$ and $M$, and the cohomology of $M$. As an application, the proof of Theorem \ref{thm:main} is given.

Recall that $w_{2}(M) \in \mathcal{D}_{M}$. Then $\mathcal{D}_{M} \subset \rho_{2}(H^{2}(M; \Z))$ implies that $M$ is Spin$^{c}$.  
We will fix an element $c\in H^{2}(M;\Z)$ such that $\rho_{2} (c) = w_{2}(M)$.
For a complex vector bundle $\eta$ over $M$, 
let
\begin{align}\label{eq:cheta}
ch(\eta)  = & ~\dim_{\C} \eta + c_{1}(\eta) + \frac{c_{1}^{2}(\eta) - 2c_{2}(\eta)}{2} + \frac{c_{1}^{3}(\eta) - 3c_{1}(\eta)c_{2}(\eta) + 3c_{3}(\eta)}{6}  \\
 & ~ + \frac{c_{1}^{4}(\eta) - 4c_{1}^{2}(\eta)c_{2}(\eta) + 2c_{2}^{2}(\eta) + 4c_{1}(\eta)c_{3}(\eta) - 4c_{4}(\eta)}{24} + ch_{5}(\eta),  \notag
\end{align}
be the Chern character of  $\eta$.
Denote by $[M]$ the fundamental class of $M$, $\langle ~\cdot~,~\cdot~ \rangle$ the Kronecker product and 
$\hat{\mathfrak{A}}(M)$ the $\mathfrak{A}$ class of $M$. It is known that
$$\hat{\mathfrak{A}}(M)=1-\frac{p_{1}(M)}{24}+\frac{-4p_{2}(M)+7p_{1}^{2}(M)}{5760}.$$
Then the differential Riemann-Roch theorem (cf. \cite[Corollary 1]{ah}) tells us that

\begin{lemma}\label{lem:rr}
For any complex vector bundle $\eta$ over $M$, the rational number 
\begin{equation*}
\langle \hat{\mathfrak{A}}(M)\cdot e^{c/2}\cdot ch(\eta),~[M]\rangle
\end{equation*}
is an integer.
\end{lemma}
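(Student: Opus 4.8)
## Proof proposal for Lemma \ref{lem:rr}

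The plan is to invoke the Atiyah--Hirzebruch version of the differential Riemann--Roch theorem for the $\mathrm{Spin}^{c}$ manifold $M$. Recall that since $M$ is $\mathrm{Spin}^{c}$ with $\mathrm{Spin}^{c}$ datum $c \in H^{2}(M;\Z)$ (so $\rho_{2}c = w_{2}(M)$), there is a canonically associated elliptic operator on $M$ — the $\mathrm{Spin}^{c}$ Dirac operator twisted by a complex vector bundle $\eta$ — whose topological index is computed by the cohomological formula
\begin{equation*}
\mathrm{index}\bigl(D_{c} \otimes \eta\bigr) = \langle \hat{\mathfrak{A}}(M) \cdot e^{c/2} \cdot ch(\tilde{\eta}),\ [M]\rangle.
\end{equation*}
First I would recall that this index is the dimension of a virtual representation space, hence by the Atiyah--Singer index theorem it is an integer; this is the substance of the statement.

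In more detail, the argument proceeds as follows. The class $\hat{\mathfrak{A}}(M) \cdot e^{c/2}$ is precisely the index density of the $\mathrm{Spin}^{c}$ Dirac operator associated to the line bundle $L$ with $c_{1}(L) = c$ (equivalently, to the $\mathrm{Spin}^{c}$ structure determined by $c$); here one uses $\hat{L}(M) = \hat{\mathfrak{A}}(M)$ expanded through degree $10$, which is exactly the displayed formula for $\hat{\mathfrak{A}}(M)$ in terms of $p_{1}(M)$ and $p_{2}(M)$ — note that in dimension $10$ only the degree-$0$, degree-$4$, and degree-$8$ components of $\hat{\mathfrak{A}}(M)$ contribute after pairing against $[M]$ together with the Chern character of $\tilde{\eta}$. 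Twisting by $\tilde{\eta}$ multiplies the index density by $ch(\tilde{\eta})$. Evaluating the resulting top-dimensional cohomology class against the fundamental class $[M]$ gives the displayed rational number, and the index theorem identifies it with the (integer-valued) analytic index of the twisted Dirac operator, which is an alternating sum of dimensions of kernel and cokernel. Since the statement is asserted for every stable complex $\tilde{\eta}$, and the Chern character and the index formula depend only on the stable class $\tilde{\eta} \in \widetilde{K}(M)$, it suffices to prove it for genuine $\eta$; the general case follows by linearity and by adding trivial bundles, which changes neither side in a way affecting integrality.

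The only genuine point requiring care — and the part I would flag as the main obstacle — is verifying that $M$ really does carry a $\mathrm{Spin}^{c}$ structure with first Chern class exactly the chosen $c$, so that the Dirac operator exists and the formula applies verbatim. This is guaranteed here: Lemma \ref{lem:rho} gives that $\rho_{2}\colon H^{2}(M;\Z) \to H^{2}(M;\Z/2)$ is surjective, so $w_{2}(M)$ lifts to an integral class, which is precisely the condition for $M$ to be $\mathrm{Spin}^{c}$, and we have \emph{fixed} such a lift $c$. Granting this, the differential Riemann--Roch / Atiyah--Singer machinery applies directly and the integrality follows immediately, with no further computation needed. I would therefore present the proof as a short citation of \cite{ah} together with the observation that $M$ is $\mathrm{Spin}^{c}$ via $c$, rather than redoing the index-theoretic derivation.
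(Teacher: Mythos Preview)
Your proposal is correct and matches the paper's own treatment: the paper simply states the lemma as a direct consequence of the differential Riemann--Roch theorem of Atiyah--Hirzebruch \cite{ah}, with no further argument given. Your write-up supplies the extra detail (that $M$ is $\mathrm{Spin}^{c}$ via the chosen $c$, and that the expression is the index of the twisted $\mathrm{Spin}^{c}$ Dirac operator), but the underlying approach is identical.
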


For any $x\in H^{2}(M;\Z)$, we will denote by $l_{x}$ the complex line bundle with $c_{1}(l_{x})=x$. 
Moreover, for a complex vector bundle $\eta$ over $M$,  we will define
\begin{equation*}
\hat{A}(c, x, \eta) = \langle \hat{\mathfrak{A}}(M) \cdot e^{c/2}\cdot [ch(l_{x}) - 1] \cdot [ch(\eta)- \dim_{\C} \eta], ~[M] \rangle,
\end{equation*}
which is an integer by Lemma \ref{lem:rr}. 
Obviously, for complex vector bundles $\eta$ and $\gamma$ over $M$, we have
\begin{equation*}
\hat{A}(c, x, \eta \pm \gamma) = \hat{A}(c, x, \eta) \pm \hat{A}(c, x,  \gamma).
\end{equation*}

\begin{example}\label{exam:eta}
For a complex vector bundle $\eta$ over $M$, 
denote by $\bar{\eta}$ the complex conjugation of $\eta$ and set 
$$l_{\eta}=l_{c_{1}(\eta)}, \quad \eta^{\prime} = \eta - l_{\eta}. $$
Then $c_{1}(\eta^{\prime}) = 0$. 
Hence
$$ch(\eta^{\prime}) - ch(\bar{\eta^{\prime}} ) = c_{3}(\eta^{\prime}) + 2 ch_{5}(\eta^{\prime})$$
by the equation \eqref{eq:cheta}.
%Since $c_{2i}(\eta) = c_{2i}(\bar{\eta})$, $c_{2i+1}(\eta) = - c_{2i+1}(\eta)$ and 
%\{begin{align}
%ch(l_{x}) = & ~ e^{x}  =  1 + x + \frac{x^{2}}{2} + \frac{x^{3}}{6} + \frac{x^{4}}{24} + \frac{x^{5}}{120}, \label{eq:chlx}
%%ch(\eta)  = & ~\dim \eta + c_{1}(\eta) + \frac{c_{1}^{2}(\eta) - 2c_{2}(\eta)}{2} + \frac{c_{1}^{3}(\eta) - 3c_{1}(\eta)c_{2}(\eta) + 3c_{3}(\eta)}{6} \label{eq:cheta} \\
%% & ~ + \frac{c_{1}^{4}(\eta) - 4c_{1}^{2}(\eta)c_{2}(\eta) + 2c_{2}^{2}(\eta) + 4c_{1}(\eta)c_{3}(\eta) - 4c_{4}(\eta)}{24} + ch_{5}(\eta),  \notag
%% ch_{1}(\eta^{\prime}-\bar{\eta^{\prime}}) & =ch_{2}(\tilde{\eta}^{\prime}-\tilde{\bar{\eta}}^{\prime})=ch_{4}(\tilde{\eta}^{\prime}-\tilde{\bar{\eta}}^{\prime})=0,\\
%%ch_{3}(\tilde{\eta}^{\prime}-\tilde{\bar{\eta}}^{\prime})&=c_{3}(\tilde{\eta}^{\prime}),
%\end{align}}
%it follows that 
%\[  ch(\eta^{\prime} - \bar{\eta^{\prime}} ) = ch(\eta) - ch(l_{\eta}) - ch(\bar{\eta}) + ch(\bar(l)_{\eta}) = \]
%where . 
Therefore, 
\begin{align*}
\hat{A}(c, x, \eta^{\prime}) - \hat{A}(c, x, \bar{\eta^{\prime}}) & = 
\langle \hat{\mathfrak{A}}(M) \cdot e^{c/2}\cdot [ch(l_{x}) - 1] \cdot [ch(\eta^{\prime})- ch(\bar{\eta^{\prime}})],~[M] \rangle \\
& = \langle c_{3}(\eta^{\prime}) \cdot (x^{2}+cx)/2,~[M] \rangle,
\end{align*}
which is an integer by Lemma \ref{lem:rr}. \qed 
\end{example}

\begin{example}\label{exam:xi}
Let $\xi$ be a real vector bundle over $M$. 
Suppose that there exists $d\in H^{2}(M;\Z)$ such that $\rho_{2}(d)=w_{2}(\xi)$. 
Set $\xi^{\prime} = \xi - (l_{d})_{\R}$.
Since $c_{2i+1}(\xi^{\prime}\otimes\C)$ is torsion, it follows from the equation \eqref{eq:cheta} that
\begin{align*}
ch(\xi^{\prime}\otimes \C) - \dim_{\R} \xi^{\prime} = - c_{2}(\xi^{\prime} \otimes \C) + \frac{c_{2}^{2}(\xi^{\prime} \otimes \C)}{12} - \frac{c_{4}(\xi^{\prime} \otimes \C)}{6}
 = p_{1}(\xi^{\prime}) + \frac{ p_{1}^{2}(\xi^{\prime})} {12} - \frac{p_{2}(\xi^{\prime})}{6}.
\end{align*}
By construction, $\xi^{\prime}$ is spin. Thus
$p_{1}(\xi^{\prime}) = 2 q_{1}(\xi^{\prime})$ and $p_{2}(\xi^{\prime}) = 2 q_{2}(\xi^{\prime}) + q_{1}^{2}(\xi^{\prime})$
by the identities \eqref{eq:q1} and \eqref{eq:q2}. Therefore, 
$$ch(\xi^{\prime}\otimes \C) - \dim_{\R} \xi^{\prime} = p_{1}(\xi^{\prime}) + \frac{ p_{1}^{2}(\xi^{\prime})} {12} - \frac{p_{2}(\xi^{\prime})}{6} = 2 q_{1}(\xi^{\prime}) + \frac{q_{1}^{2}(\xi^{\prime})}{6} - \frac{q_{2}(\xi^{\prime})}{3}.$$
Consequently, 
\begin{align*}
3 \hat{A}(c, x, \xi^{\prime} \otimes \C )  =  &~ 3 \langle \hat{\mathfrak{A}}(M)\cdot e^{c/2}\cdot [ch( l_{x}) - 1] \cdot [ch(\xi^{\prime}\otimes\C) - \dim_{\R} \xi^{\prime}], ~[M]\rangle   \\
 = &~ \langle (x^{3} + \frac{3cx^{2}}{2} + \frac{3c^{2}x}{4} - \frac{p_{1}(M)x}{4})q_{1}(\xi^{\prime}) + x(\frac{q_{1}^{2}(\xi^{\prime})}{2} - q_{2}(\xi^{\prime})), ~[M] \rangle  \\
 = &~ \langle\frac{x}{2}\cdot q_{1}(\xi^{\prime}) \cdot(q_{1}(\xi^{\prime})-\frac{p_{1}(M)-c^{2}}{2}), ~[M]\rangle   \\
& +\langle q_{1}(\xi^{\prime}) (x^{2}+cx) (2x+c)/2, ~ M]\rangle 
  -\langle q_{2}(\xi^{\prime})\cdot x, ~ [M]\rangle, 
\end{align*}
which is an integer by Lemma \ref{lem:rr}. \qed
\end{example}

\begin{remark}
In Example \ref{exam:xi}, by construction, we have 
\begin{equation} \label{eq:q1p}
q_{1}(\xi^{\prime}) = \frac{p_{1}(M) - d^{2}}{2} = \frac{ p_{1}(\xi) - d^{2}}{2}.
\end{equation}
\end{remark}

\begin{example}\label{exam:7}
Let $\eta$ (resp. $\xi$) be a complex (resp. real) vector bundle over $M$. Suppose that $\zeta = \eta_{\R} - \xi$ is stably trivial over $M^{(7)}$. 
Then $\zeta \otimes \C$ is stably trivial over $M^{(7)}$. Moreover, since $c_{5}(\zeta \otimes \C)$ is torsion, it follows that 
$$ch(\zeta \otimes \C) = - \frac{c_{4}(\zeta \otimes \C)}{6} = - \frac{p_{2}(\zeta)}{6}.$$
Obviously, $\zeta$ is spin with $q_{1}(\zeta) = 0$.
Therefore, 
$$ch(\zeta \otimes \C) =  - \frac{p_{2}(\zeta)}{6} = - \frac{q_{2}(\zeta)}{3}$$
by the identities \eqref{eq:q1} and \eqref{eq:q2}. 
Thus, 
\begin{align*}
3 \hat{A}(c, x, \zeta \otimes \C) = &~3 \langle \hat{\mathfrak{A}}(M)\cdot e^{c/2}\cdot [ch(l_{x}) - 1] \cdot [ch(\zeta \otimes \C) - \dim_{\R} \zeta],~[M]\rangle \\
= &~ - \langle q_{2}(\zeta)\cdot x,~[M] \rangle,
\end{align*}
which is obviously an integer. \qed
\end{example}

\begin{lemma}\label{lem:dm}
$\mathcal{D}_{M}$ is the annihilator of $\mathrm{Sq}^{2}\rho_{2} ( H^{6}(M;\Z) )$ with respect to the cup product.
\end{lemma}

\begin{proof}
Recall that 
\[ \mathcal{D}_{M} = \{ x \in H^{2}(M; \Z/2) \mid x^{2} + w_{2}(M) x \in \rho_{2} ( TH^{4}(M;\Z) ) \}.\]
For any $x\in H^{2}(M;\Z/2)$ and any $z\in H^{6}(M;\Z)$, it follows from the Cartan formula and $V_{2}(M) = w_{2}(M)$ that
\begin{equation*}
x  \mathrm{Sq}^{2}\rho_{2}(z) = \mathrm{Sq}^{2}(x  \rho_{2}(z)) + \mathrm{Sq}^{2}(x)  \rho_{2}(z) = (w_{2}(M) x + x^{2})\cdot\rho_{2}(z).
\end{equation*}
Here $V_{2}(M)$ is the second Wu class of $M$. 
Since the annihilator of $H^{6}(M;\Z)$ is $TH^{4}(M;\Z)$ (cf. \cite[Lemma 1]{masseysw}), the fact of this lemma can be deduced easily from the definition of $\mathfrak{D}_{M}$.
\end{proof}

For any $x \in \mathcal{D}_{M}$, since $\mathcal{D}_{M} \subset \rho_{2}(H^{2}(M;\Z))$, we will take and fix one element $\tilde{x} \in H^{2}(M; \Z)$ such that $\rho_{2}(\tilde{x}) = x$.
%there exists $\tilde{x} \in H^{2}(M; \Z)$ such that $\rho_{2}(\tilde{x}) = x$ for any $x \in \mathcal{D}_{M}$. 
Then it follows from the definition of $\mathcal{D}_{M}$ that there are $z_{c, \tilde{x}} \in H^{4}(M; \Z)$ and $t_{c,\tilde{x}} \in TH^{4}(M; \Z)$ such that
\begin{equation}\label{eq:zcx}
\tilde{x}^{2} + c\tilde{x} = 2 z_{c, \tilde{x}} + t_{c, \tilde{x}}.
\end{equation}
For a real vector bundle $\xi$ over $M$, and $d \in H^{2}(M;\Z)$, we will set
\begin{equation*}
A_{c,\xi}(d, \tilde{x}) : =\langle\frac{\tilde{x}}{2}\cdot\frac{p_{1}(\xi)-d^{2}}{2}\cdot (\frac{p_{1}(\xi)-d^{2}}{2}-\frac{p_{1}(M)-c^{2}}{2} ), ~[M]\rangle.
\end{equation*}

\begin{lemma}\label{lem:w8A}
Let $\eta$ be a complex vector bundle over $M$ such that $\eta_{\R} |_{M^{(7)}} $ is a stable complex structure of $\xi$ over $M^{(7)}$.
Then $w_{8}(\eta)-w_{8}(\xi) \in \mathrm{Sq}^{2}\rho_{2}H^{6}(M;\Z)$ if and only if 
\begin{equation*}%\label{eq:xi}
(w_{8}(\xi)+w_{2}(\xi)\mathrm{Sq}^{2}w_{4}(\xi)) \cdot  x \equiv A_{c,\xi}(c_{1}(\eta), \tilde{x}) + w_{4}(\xi) \mathrm{Sq}^{2} \rho_{2} (z_{c, \tilde{x}})  \mod 2
\end{equation*}
holds for every $x\in\mathcal{D}_{M}$. %where $d\in H^{2}(M;\Z)$ is an element satisfies $\rho_{2}(d)=w_{2}(\xi)$.
\end{lemma}

\begin{remark}\label{rem:acxi}
It follows from the proof of this lemma that the rational number $A_{c,\xi}(c_{1}(\eta), \tilde{x})$ is an integer, so it make sense to take congruent classes modulo $2$.
\end{remark}

\begin{remark}
It follows from the proof of this lemma that the $\bmod ~ 2$ value of 
$$A_{c,\xi}(c_{1}(\eta), \tilde{x}) + w_{4}(\xi) \mathrm{Sq}^{2} \rho_{2} (z_{c, \tilde{x}})$$
 does not depend on the choice of $c, \tilde{x}$ and the selection of $z_{c, \tilde{x}}$. 
\end{remark}

\begin{proof}
Set $\zeta = \eta_{\R} - \xi$. Then $w_{8}(\eta) - w_{8}(\xi) = \rho_{2}(q_{2}(\zeta))$ by the equation \eqref{eq:po8q}.
Therefore,
$$w_{8}(\eta)-w_{8}(\xi) \in \mathrm{Sq}^{2}\rho_{2}H^{6}(M;\Z)$$
if and only if 
$x \cdot \rho_{2}(q_{2}(\zeta)) = 0$,
i.e.,
$$\langle \tilde{x} \cdot q_{2}(\zeta), ~ [M] \rangle \equiv0~ \bmod 2$$
holds for every  $x \in \mathcal{D}_{M}$ by Lemma \ref{lem:dm}. 

Moreover, set $\eta^{\prime} = \eta -  l_{\eta}$ and $\xi^{\prime} = \xi - ( l_{\eta})_{\R}$.
Since
$$\langle \tilde{x} \cdot q_{2}(\zeta),~[M] \rangle = - 3 \hat{A}(c, \tilde{x}, \zeta \otimes \C)$$ by Example \ref{exam:7}, and
\begin{align*}
\hat{A}(c, \tilde{x}, \zeta \otimes \C) & = \hat{A}(c, \tilde{x}, \eta_{\R} \otimes \C) - \hat{A}(c, \tilde{x}, \xi \otimes \C) \\
& = \hat{A}(c, \tilde{x}, \eta_{\R}^{\prime} \otimes \C) - \hat{A}(c, \tilde{x}, \xi^{\prime} \otimes \C) \\
& = \hat{A}(c, \tilde{x}, \eta^{\prime}) +\hat{A}(c, \tilde{x}, \bar{\eta^{\prime}}) - \hat{A}(c, \tilde{x}, \xi^{\prime} \otimes \C) \\
& = \hat{A}(c, \tilde{x}, \eta^{\prime})  - \hat{A}(c, \tilde{x}, \bar{\eta^{\prime}}) - \hat{A}(c, \tilde{x}, \xi^{\prime} \otimes \C) + 2 \hat{A}(c, \tilde{x}, \bar{\eta^{\prime}}),
\end{align*}
It follows that
%$$\langle \tilde{x} \cdot q_{2}(\zeta), ~ [M] \rangle \equiv0~ \bmod 2$$
%if and only if 
\begin{equation} \label{eq:A}
\langle \tilde{x} \cdot q_{2}(\zeta), ~ [M] \rangle \equiv \hat{A}(c, \tilde{x}, \eta^{\prime})  - \hat{A}(c, \tilde{x}, \bar{\eta^{\prime}}) - 3\hat{A}(c, \tilde{x}, \xi^{\prime} \otimes \C)  \bmod 2.
\end{equation}
Now, note that we have the following facts:
\begin{enumerate}
\item[(1)] it follows from Examples \ref{exam:eta} and \ref{exam:xi}, the identities \eqref{eq:zcx} and \eqref{eq:q1p} that
\begin{align*}
\hat{A}(c, \tilde{x}, \eta^{\prime})  - \hat{A}(c, \tilde{x}, \bar{\eta^{\prime}}) & =  \langle c_{3}(\eta^{\prime}) \cdot ( \tilde{x}^{2} + c \tilde{x})/2,~[M] \rangle \\
& = \langle c_{3}(\eta^{\prime}) \cdot z_{c,\tilde{x}}, ~[M] \rangle,\\
3 \hat{A}(c, \tilde{x}, \xi^{\prime} \otimes \C ) &  =  A_{c,\xi}(c_{1}(\eta), \tilde{x}) + \langle q_{1}(\xi^{\prime}) ( \tilde{x}^{2}+c \tilde{x}) ( 2 \tilde{x} + c )/2, ~ M]\rangle 
  - \langle q_{2}(\xi^{\prime})\cdot \tilde{x}, ~ [M]\rangle \\
  & = A_{c,\xi}(c_{1}(\eta), \tilde{x}) + \langle q_{1}(\xi^{\prime}) \cdot z_{c, \tilde{x}} \cdot c, ~[M] \rangle - \langle q_{2}(\xi^{\prime})\cdot \tilde{x}, ~ [M]\rangle \\ 
  & \quad + 2 \langle q_{1}(\xi^{\prime}) \cdot z_{c, \tilde{x}} \cdot x, ~[M] \rangle,
\end{align*}

\item[(2)] $A_{c,\xi}(c_{1}(\eta), \tilde{x})$ is an integer by  fact (1),

\item[(3)] by construction, $\xi^{\prime}$ is spin and
\begin{align*}
\rho_{2}(q_{1}(\xi^{\prime})) & = w_{4}(\xi^{\prime}) = w_{4}(\xi), \\
\rho_{2}(q_{2}(\xi^{\prime}))  & = w_{8}(\xi^{\prime}) =w_{8}(\xi) + w_{2}(\xi)w_{6}(\xi) + w_{2}^{2}(\xi)w_{4}(\xi) , \\
c_{3}(\eta^{\prime}) & = c_{3}(\eta) - c_{1}(\eta) c_{2}(\eta),
\end{align*}

\item[(4)] Since $\eta_{\R} |_{M^{(7)}} $ is a stable complex structure of $\xi$ over $M^{(7)}$, 
$\rho_{2}(c_{i}(\eta)) = w_{2i}(\xi)$ for $i \le 3$,

\item[(5)] It follows from the Wu's explicit formula (cf. \cite[Problem 8-A]{ms74}) that 
$$\mathrm{Sq}^{2}w_{4}(\xi)  = w_{2}(\xi) w_{4}(\xi) + w_{6}(\xi),$$

\item[(6)] $w_{2}(M)w_{4}(\xi) \rho_{2} (z_{c, \tilde{x}}) = \mathrm{Sq}^{2}(w_{4}(\xi) \rho_{2} (z_{c, \tilde{x}})) = \mathrm{Sq}^{2}w_{4}(\xi) \cdot \rho_{2}(z_{c, \tilde{x}}) + w_{4}(\xi)\mathrm{Sq}^{2}\rho_{2}(z_{c, \tilde{x}}).$
\end{enumerate}
Then substituting these facts into the congruence \eqref{eq:A}, we can get that
\begin{align*}
& ~\langle \tilde{x} \cdot q_{2}(\zeta), ~ [M] \rangle \\
\equiv & ~ \hat{A}(c, \tilde{x}, \eta^{\prime})  - \hat{A}(c, \tilde{x}, \bar{\eta^{\prime}}) - 3\hat{A}(c, \tilde{x}, \xi^{\prime} \otimes \C) \bmod  2\\
 \equiv &~(w_{8}(\xi)+w_{2}(\xi)\mathrm{Sq}^{2}w_{4}(\xi)) \cdot x + A_{c,\xi}(c_{1}(\eta), \tilde{x}) + w_{4}(\xi) \mathrm{Sq}^{2} \rho_{2} (z_{c, \tilde{x}})  \bmod  2,
\end{align*}
which completes the proof.
\end{proof}

\begin{theorem}\label{thm:csxi}
Let $\xi$ be a real vector bundle over $M$. 
Then $\xi$ admits a stable complex structure if and only if $\beta(w_{2}(\xi)) = 0$, $\beta (w_{6}(\xi)) = 0$ and
\begin{equation}\label{eq:xid}
(w_{8}(\xi)+w_{2}(\xi)\mathrm{Sq}^{2}w_{4}(\xi)) \cdot x \equiv A_{c,\xi}(d, \tilde{x}) + w_{4}(\xi) \mathrm{Sq}^{2} \rho_{2} (z_{c, \tilde{x}})  \mod 2
\end{equation}
holds for every $x\in\mathcal{D}_{M}$, where $d\in H^{2}(M;\Z)$ is an element satisfies $\rho_{2}(d)=w_{2}(\xi)$.
\end{theorem}

\begin{remark}
Note that $\beta(w_{2}(\xi)) = 0$ implies that there exists $d \in H^{2}(M; \Z)$ such that $\rho_{2}(d) = w_{2}(\xi)$ by the Bockstein sequence \eqref{eq:beta}.
\end{remark}

\begin{remark}
%If $\rho_{2}(d)=w_{2}(\xi)$, set $\xi^{\prime} = \xi - l_{d\R}$. Then $\xi^{\prime}$ is spin and
%$$q_{1}(\xi^{\prime})=(p_{1}(\xi)-d^{2})/2.$$
%Hence it follows from Example \ref{exam:xi} that
%$$A_{c,\xi,}(d, \tilde{x}) = 3 \hat{A}(c, \tilde{x}, \xi^{\prime} \otimes \C) - \langle q_{1}(\xi^{\prime}) (\tilde{x}^{2}+c\tilde{x}) (2\tilde{x}+c)/2,M]\rangle + \langle q_{2}(\xi^{\prime}) \tilde{x}, ~[M]\rangle.$$
%Now since $\rho_{2}(\tilde{x}) = x \in \mathcal{D}_{M}$, the equation \eqref{eq:zcx} implies that 
%$$\langle q_{1}(\xi^{\prime}) (\tilde{x}^{2}+c\tilde{x}) (2\tilde{x}+c)/2,M]\rangle = \langle q_{1}(\xi^{\prime}) \cdot z_{c,\tilde{x}} \cdot (2\tilde{x} + c), ~ [M] \rangle$$
%is an integer. Therefore, it deduced from $\langle q_{2}(\xi^{\prime})\cdot x,[M]\rangle$ is an integer that 
Suppose that $\beta(w_{2}(\xi)) = 0$ and $\beta (w_{6}(\xi)) = 0$. Then it follows from the proof of Theorem \ref{thm:csxi} that there is a complex vector bundle $\gamma$ over $M$ with $c_{1}(\gamma) = d$ such that $\gamma_{\R} |_{M^{(7)}} $ is a stable complex structure of $\xi$ over $M^{(7)}$. Therefore, 
the rational number $A_{c,\xi,}(d, \tilde{x})$ is an integer by Remark \ref{rem:acxi}, so it make sense to take congruent classes modulo $2$.
\end{remark}

\begin{remark}
Theorem \ref{thm:csxi} is a generalization of Dessai \cite[Theorem 1.9]{dessai}.
The idea of the proof of Lemma \ref{lem:w8A} and Theorem \ref{thm:csxi} is due to Dessai \cite{dessai}. 
One may find that the congruence \eqref{eq:xid} is a simplification of the congruence in \cite[Theorem 1.9]{dessai}. 
In the proof of \cite[Theorem 1.9]{dessai}, under the assumption that $H_{3}(M ; \Z)$ contains no $2$-torsion, 
a complex vector bundle $G_{x}$ was constructed to eliminate $ch(F) + ch(\bar{F})$ such that the congruence in \cite[Theorem 1.9]{dessai} was obtained (cf. \cite[p. 165, line 7]{dessai}). 
Our trick is different. 
With no assumptions on $H_{3}(M ; \Z)$, we use the identity $ch(\eta) + ch(\bar{\eta}) = ch(\eta) - ch(\bar{\eta}) + 2ch(\bar{\eta})$ to get the congruence \eqref{eq:xid} (cf. the equation \eqref{eq:A}).

%Theorem \ref{thm:csxi} is a generalization According to Theorem \ref{thm:po8} (or \cite[Lemma 1.7]{dessai}), in order to determine the necessary and sufficient conditions in terms of the characteristic classes of $\xi$ and $M$ and the cohomology of $M$, we should find some equivalent descriptions of the condition \eqref{eq:po8w} such that $w_{8}(\eta)$ (or $c_{4}(F)$) disappeared. In the proof of \cite[Theorem 1.9]{dessai}, under the assumption that $H_{3}(M ; \Z)$ contains no $2$-torsion, a complex vector bundle $G_{x}$ was constructed by Dessai such that $ch(F) + ch(\bar{F})$ disappeared in the calculation, hence the goal was achieved (cf. \cite[p. 165, line 7]{dessai}). Our trick are different. With no assumptions on $H_{3}(M ; \Z)$, we use the identity $ch(\eta) + ch(\bar{\eta}) = ch(\eta) - ch(\bar{\eta}) + 2ch(\bar{\eta})$ to achieve the goal (cf. the equation \eqref{eq:A}).
\end{remark}

%\begin{remark}
%One may find that $z_{x}$ and $t_{x}$ are depending on $x$ but not uniquely determined. Suppose that there are another $z_{x}^{\prime}$ and $t_{x}^{\prime}$ which satisfying that $x^{2}+cx=2z_{x}^{\prime}+t_{x}^{\prime}$. Then $t=z_{x}-z_{x}^{\prime}$ must be a torsion class. Since $w_{2}(M)$, $w_{2}(\xi)$, $w_{4}(\xi)$ and $w_{6}(\xi)$ are all the $\mathrm{mod}~ 2$ reduction of some integral classes, it follows that $w_{4}(\xi)\mathrm{Sq}^{2}\rho_{2}(t)=\rho_{2}t(w_{2}(M)w_{4}(\xi)+\mathrm{Sq}^{2}w_{4}(\xi))=0$. This means that the  congruence \eqref{eq:xi} is not depend on the choice of $z_{x}$.
%\end{remark}

\begin{proof}
One direction follows immediately from Theorem \ref{thm:po8} and Lemmas \ref{lem:o90} and \ref{lem:w8A}.

Now we assume that $\beta(w_{2}(\xi)) = 0$, $\beta(w_{6}(\xi)) = 0$ and the congruence \eqref{eq:xid} holds for every $x \in \mathcal{D}_{M}$.
Then it follows from $\beta(w_{2}(\xi)) = 0$, $\beta(w_{6}(\xi)) = 0$ and Lemmas \ref{lem:o90} and \ref{lem:oc} that there exists a complex vector bundle $\eta$ over $M$ such that $\eta_{\R} |_{M^{(7)}} $ is a stable complex structure of $\xi$ over $M^{(7)}$.
Since $\rho_{2}(d) = \rho_{2}(c_{1}(\eta)) = w_{2}(\xi)$, there must exists $y \in H^{2}(M; \Z)$ such that $d = c_{1}(y) + 2y$.
Set $$\gamma = \eta + l_{y} - \bar{l}_{y}.$$ 
Then $\gamma$ is a complex vector bundle over $M$ with $c_{1}(\gamma) = d$ (cf. Heaps \cite[Lemma 2.1]{heaps}) and $\gamma_{\R} |_{M^{(7)}} $ is also a stable complex structure of $\xi$ over $M^{(7)}$.  
Therefore, note that the congruence \eqref{eq:xid} holds for every $x \in \mathcal{D}_{M}$, Lemma \ref{lem:w8A} tells us that
$$w_{8}(\gamma) - w_{8}(\xi) \in \mathrm{Sq}^{2}H^{6}(M: \Z).$$
Thus $\xi$ admits a stable complex structure by Theorem \ref{thm:po8}.
The proof is completed.
\end{proof}

\begin{remark}\label{rem:acxiw}
In the proof of Theorem \ref{thm:csxi}, we have $w_{8}(\gamma) = w_{8}(\eta)$. Then it follows from the proof of Lemma \ref{lem:w8A} that the $\bmod  ~2$ value of $A_{c,\xi}(d, \tilde{x}) + w_{4}(\xi) \mathrm{Sq}^{2} \rho_{2} (z_{c, \tilde{x}})$, does not depend on the choice of $c, d, \tilde{x}$ and the selection of $z_{c, \tilde{x}}$.
%\remhy{This congruence does not depend on the choice of $c, d, \tilde{x}$ and the selection of $z_{c, \tilde{x}}$. }
\end{remark}

The rest of this section is devoted to prove Theorem \ref{thm:main}.

Denote by $V_{i}(M) \in H^{i}(M;\Z/2)$ the Wu-class of $M$. It is known that they satisfy the Wu formula (cf. \cite[p. 132]{ms74})
\begin{equation}\label{eq:wu}
w_{k}(M)=\Sigma_{i=0}^{k}\mathrm{Sq}^{i}V_{k-i}(M).
\end{equation}
Since $M$ is orientable, i.e., $w_{1}(M) = 0$, it follows from the definition of $V_{i}(M)$ and $\mathrm{Sq}^{2i+1} = \mathrm{Sq}^{1}\mathrm{Sq}^{2i}$ that $V_{i}(M)=0$ for $i\neq2,~4$. Moreover, note that $M$ is spin$^{c}$, it can be deduced easily from the Wu formula \eqref{eq:wu} that
\begin{align}
V_{2}(M)&=w_{2}(M), \notag \\
V_{4}(M) & =w_{4}(M)+w_{2}^{2}(M),\label{eq:v4} \\
w_{6}(M)&=\mathrm{Sq}^{2}V_{4}(M)= \mathrm{Sq}^{2}w_{4}(M),\label{eq:w6} \\
w_{8}(M)&=\mathrm{Sq}^{4}V_{4}(M) = V_{4}^{2}(M)=w_{4}^{2}(M)+w_{2}^{4}(M).\label{eq:w8}
\end{align}
Furthermore, from Wu's explicit formula (cf. \cite[Problem 8-A]{ms74}), we have
$$\mathrm{Sq}^{2}w_{4}(M)=w_{2}(M)w_{4}(M)+w_{6}(M).$$
Therefore, combining this identity with the identity \eqref{eq:w6}, yields
\begin{equation} \label{eq:w24}
w_{2}(M)w_{4}(M)=0.
\end{equation}
Thus,
\begin{equation}\label{eq:w26}
w_{2}(M)w_{6}(M) = w_{2}(M) \mathrm{Sq}^{2}w_{4}(M) = \mathrm{Sq}^{2}(w_{2}(M)w_{4}(M)) + w_{2}^{2}(M) w_{4}(M) = 0.
\end{equation}

\begin{lemma}\label{lem:a}
For any $x \in \mathcal{D}_{M}$ and $a,~b\in H^{2}(M;\Z)$, we have $x \cdot \rho_{2}(a^{2}b^{2})=0$. In particular, $x \cdot \rho_{2}(a^{4})=0$.
\end{lemma}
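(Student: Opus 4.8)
\textbf{Proof proposal for Lemma \ref{lem:a}.}

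The plan is to reduce the vanishing of $\rho_2(xa^2b^2)$ to a statement about the Wu classes of $M$, using the fact that $x\in\mathcal{D}(M)$ forces $\rho_2(x)$ to annihilate $\mathrm{Sq}^2\rho_2 H^6(M;\Z)$ by Lemma \ref{lem:dm}. First I would work entirely mod $2$ and write $\bar x=\rho_2(x)$, $\bar a=\rho_2(a)$, $\bar b=\rho_2(b)$, so that the goal is $\bar x\,\bar a^2\bar b^2=0$ in $H^{10}(M;\Z/2)$. The natural move is to recognize $\bar a^2=\mathrm{Sq}^2\bar a$ and $\bar b^2=\mathrm{Sq}^2\bar b$ (squaring of a degree-$2$ class), and then to try to rewrite $\bar a^2\bar b^2$ as something in the image of $\mathrm{Sq}^2$ applied to a mod-$2$ reduction of an integral degree-$6$ class, so that Lemma \ref{lem:dm} applies directly.

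The key computation is a Cartan-formula manipulation. Since $\bar a^2\bar b^2=(\bar a\bar b)^2=\mathrm{Sq}^4(\bar a\bar b)$ (the top square of a degree-$4$ class), and also $\mathrm{Sq}^2(\bar a^2\bar b)=\bar a^2\mathrm{Sq}^2\bar b=\bar a^2\bar b^2$ because $\mathrm{Sq}^2\bar a^2=\mathrm{Sq}^2\mathrm{Sq}^2\bar a=\mathrm{Sq}^3\mathrm{Sq}^1\bar a$ involves $\mathrm{Sq}^1\bar a$. Here I would use that $\bar a=\rho_2(a)$ is a mod-$2$ reduction, hence $\mathrm{Sq}^1\bar a=\beta'\rho_2(a)$ maps to the integral Bockstein, which vanishes; more precisely $\mathrm{Sq}^1\rho_2(a)=\rho_2$ of nothing$\ldots$ the cleanest route is: $\mathrm{Sq}^1$ on an integral reduction in degree $2$ need not vanish in general, but $\mathrm{Sq}^2\mathrm{Sq}^2=\mathrm{Sq}^3\mathrm{Sq}^1$ and $\mathrm{Sq}^3\mathrm{Sq}^1\bar a$ lands in degree $5$; I would instead simply observe $\bar a^2\bar b^2=\mathrm{Sq}^2(\bar a^2\bar b)$ directly, since by Cartan $\mathrm{Sq}^2(\bar a^2\bar b)=\mathrm{Sq}^2(\bar a^2)\bar b+\mathrm{Sq}^1(\bar a^2)\mathrm{Sq}^1\bar b+\bar a^2\mathrm{Sq}^2\bar b$ and $\mathrm{Sq}^1(\bar a^2)=0$ (any square has $\mathrm{Sq}^1$ zero, as $\mathrm{Sq}^1$ is a derivation and $\mathrm{Sq}^1\mathrm{Sq}^1=0$ gives $\mathrm{Sq}^1(\bar a^2)=2\bar a\,\mathrm{Sq}^1\bar a=0$), while $\mathrm{Sq}^2(\bar a^2)=\mathrm{Sq}^2\mathrm{Sq}^2\bar a=\mathrm{Sq}^3\mathrm{Sq}^1\bar a$ which has degree $5$ and when multiplied by $\bar b\in H^2$ lands in $H^7$, contributing to $H^9(M;\Z/2)=0$ — wait, that product is in degree $7$, not $10$. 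Let me instead just note $\mathrm{Sq}^2(\bar a^2)\cdot\bar b\cdot\bar x$ would be degree $5+2+2=9$, and $H^9(M;\Z/2)\cong H_1(M;\Z/2)=0$ by Poincaré duality and $H_1(M;\Z)=0$; so that term dies. Hence $\bar x\bar a^2\bar b^2=\bar x\,\mathrm{Sq}^2(\bar a^2\bar b)$.

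Now $\bar a^2\bar b$ has degree $6$; I need it to be $\rho_2$ of an integral class so that Lemma \ref{lem:dm} applies. Since there is no $2$-torsion in $H_2(M;\Z)$, hence none in $H^3(M;\Z)$, and one checks via the Bockstein sequence that $\rho_2\colon H^6(M;\Z)\to H^6(M;\Z/2)$ hits $\bar a^2\bar b=\rho_2(a^2 b)$ — indeed $a^2b$ is already an integral class reducing to it. Therefore $\mathrm{Sq}^2(\bar a^2\bar b)\in\mathrm{Sq}^2\rho_2 H^6(M;\Z)$, and since $x\in\mathcal{D}(M)$, Lemma \ref{lem:dm} gives $\rho_2(x)\cdot\mathrm{Sq}^2(\bar a^2\bar b)=0$, i.e. $\rho_2(xa^2b^2)=0$. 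The special case $b=a$ gives $\rho_2(xa^4)=0$. The main obstacle is the bookkeeping in the Cartan-formula step — making sure every error term either vanishes because it is a square's $\mathrm{Sq}^1$, or because it lands in the zero group $H^9(M;\Z/2)$, so that $\bar a^2\bar b^2$ cleanly equals $\mathrm{Sq}^2$ of an integrally-defined degree-$6$ class after multiplication by $\bar x$; once that identity is in hand, Lemma \ref{lem:dm} closes the argument immediately.
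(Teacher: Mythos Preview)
Your approach is the paper's own --- express $\bar a^2\bar b^2$ as $\mathrm{Sq}^2$ of the mod-$2$ reduction of an integral degree-$6$ class and then invoke Lemma~\ref{lem:dm} --- but the execution contains a degree error that breaks the argument as written. You claim $\mathrm{Sq}^2(\bar a^2)=\mathrm{Sq}^3\mathrm{Sq}^1\bar a$ has degree $5$, so that $\bar x\cdot\mathrm{Sq}^2(\bar a^2)\cdot\bar b$ lands in $H^9(M;\Z/2)=0$; but $\bar a$ has degree $2$, hence $\mathrm{Sq}^3\mathrm{Sq}^1\bar a$ has degree $6$, and the product lies in $H^{10}(M;\Z/2)$, which is nonzero. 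So the degree argument does not kill this term.

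The term does vanish, however, for precisely the reason you raised and then dismissed: $\mathrm{Sq}^1\bar a=\mathrm{Sq}^1\rho_2(a)=\rho_2\beta\rho_2(a)=0$, simply by exactness of the Bockstein sequence \eqref{eq:beta} (so $\beta\circ\rho_2=0$ always --- no hypothesis on $M$ is needed, and your remark that ``$\mathrm{Sq}^1$ on an integral reduction in degree $2$ need not vanish in general'' is incorrect). With that fix your Cartan computation gives $\mathrm{Sq}^2(\bar a^2\bar b)=\bar a^2\bar b^2$ on the nose, and the rest goes through. The paper records exactly this identity with the roles of $a$ and $b$ swapped, writing $\rho_2(a^2b^2)=\mathrm{Sq}^2\rho_2(ab^2)$, and then applies Lemma~\ref{lem:dm}.
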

\begin{proof} 
Since $\rho_{2}(a^{2}b^{2}) = \mathrm{Sq}^{2}\rho_{2}(ab^{2}) \in \mathrm{Sq}^{2}\rho_{2} (H^{6}(M;\Z))$, the facts of this lemma follow directly from Lemma \ref{lem:dm}.
\end{proof}

\begin{lemma}\label{lem:ac}
For any $x \in \mathcal{D}_{M}$ and  $d\in H^{2}(M;\Z)$ which satisfies $\rho_{2}(d) = w_{2}(M)$, 
$$A_{c,TM}(d, \tilde{x})\equiv0 \bmod 2.$$
 %$\tilde{x} \in H^{2}(M: \Z)$ be an element satisfies $\rho_{2}(\tilde{x}) = x$, 
\end{lemma}
\begin{proof}
Since $\rho_{2}(c) = \rho_{2}(d) = w_{2}(M)$, there must exists $a \in H^{2}(M; \Z)$ such that $d = c- 2a$. Then by definition, 
\begin{align*}
 A_{c,TM}(d, \tilde{x}) & = \langle\frac{\tilde{x}}{2}\cdot\frac{p_{1}(M)-d^{2}}{2}\cdot(\frac{p_{1}(M)-d^{2}}{2}-\frac{p_{1}(M)-c^{2}}{2}), ~[M]\rangle \\
 & = \langle\frac{\tilde{x}}{2}\cdot \frac{p_{1}(M) - d^{2}}{2}\cdot(\frac{p_{1}(M)-(c-2a)^{2}}{2}-\frac{p_{1}(M)-c^{2}}{2}), ~[M]\rangle \\
 & = \langle \tilde{x} \cdot (ac - a^{2}) \cdot \frac{p_{1}(M) - d^{2}}{2}, ~[M] \rangle.
 \end{align*}
 Now, since we have $\rho_{2}(\frac{p_{1}(M)-d^{2}}{2})=w_{4}(M)$, and
\begin{align*}
& ~\rho_{2} \left(\tilde{x} \cdot (ac - a^{2}) \cdot \frac{p_{1}(M) - d^{2}}{2} \right) \\
= & ~ x  w_{2}(M)w_{4}(M)  \rho_{2}(a) - x w_{4}(M) \rho_{2}(a^{2}) \\
% \rho_{2}(A_{c,\tau}(d,x))&=\rho_{2}(xa)w_{2}(M)w_{4}(M)+\rho_{2}(xa^{2})w_{4}(M)\\
 = &~ x w_{4}(M) \rho_{2}(a^{2}) \text{\quad by the identity \eqref{eq:w24}}\\
 = &~ \mathrm{Sq}^{4}(x \rho_{2}(a^{2})) + x w_{2}^{2}(M)\rho_{2}(a^{2}) \text{\quad by the identities \eqref{eq:wu} and \eqref{eq:v4}}\\
 =&~ x \rho_{2}(a^{4}) + x \rho_{2}(c^{2}a^{2}) \text{\quad by the Cartan formula}\\
 = & ~0\text{\quad by Lemma \ref{lem:a}},
\end{align*}
it follows that we always have
$A_{c,TM}(d, \tilde{x})\equiv0 \bmod 2$, which completes the proof.
\end{proof}

We are now in a position to prove Theorem \ref{thm:main}.

\begin{proof}[Proof of Theorem \ref{thm:main}]
Since $\mathcal{D}_{M} \subset \rho_{2} (H^{2}(M; \Z))$ implies that $M$ is spin$^{c}$, $\beta(w_{2}(M)) = 0$ is always satisfied.
Moreover, we have $x w_{2}^{4}(M) = 0$ by Lemma \ref{lem:a}, $w_{2}(M)\mathrm{Sq}^{2}w_{4}(M) = w_{2}(M) w_{6}(M) = 0$ by the identities \eqref{eq:w6} and \eqref{eq:w26}, and 
\begin{align*}
 w_{4}(M)\mathrm{Sq}^{2}\rho_{2}(z_{c,\tilde{x}}) & = \mathrm{Sq}^{2}(w_{4}(M) \rho_{2}(z_{c,\tilde{x}})) + \rho_{2}(z_{c,\tilde{x}})\mathrm{Sq}^{2}w_{4}(M) \\
& = w_{2}(M)w_{4}(M)\rho_{2}(z_{c,\tilde{x}}) + \rho_{2}(z_{c,\tilde{x}})\mathrm{Sq}^{2}w_{4}(M) \\
& = \rho_{2}(z_{c,\tilde{x}})\mathrm{Sq}^{2}w_{4}(M) \\
& = \rho_{2}(z_{c,\tilde{x}})w_{6}(M)
\end{align*}
by the identities \eqref{eq:w6} and \eqref{eq:w24}.
Then combing this facts with Lemma \ref{lem:ac} and the identity \eqref{eq:w8}, the statements of Theorem \ref{thm:main} follows from Theorem \ref{thm:csxi} by taking $\xi = TM$.
%$A_{c,TM}(d, \tilde{x})\equiv0 \bmod 2$ by Lemma \ref{lem:ac},
%$w_{8}(M) = w_{4}^{2}(M) + w_{2}^{4}(M)$ by the identity \eqref{eq:w8},
%Hence Note that we have $\mathrm{Sq}^{2}(\rho_2 z_{x})w_{4}(M)=\rho_{2}z_{x}\mathrm{Sq}^{2}w_{4}(M)+z_{x}w_{2}(M)w_{4}(M)$. The statement of Theorem \ref{thm:main} can be deduced easily from Lemma \ref{lem:ac}, identities \eqref{eq:w6},\eqref{eq:w8} and \eqref{eq:w26}.
\end{proof}

%\begin{proof}[Proof of Corollary \ref{coro:h}]
%If $M$ is spin, $\mathfrak{D}_{M}$ generated by $2h$. Hence the equation \eqref{eq:sacs} is always satisfied.
%
%If $M$ is non-spin, $\mathfrak{D}(M)$ generated by $h$ and $\rho_{2}(h)=w_{2}(M)$. Therefore, we only need to check the equation \eqref{eq:sacs}  is true for $x=h$.  Note that we have $z_{h}=h^{2}$. It follows from the identity \eqref{eq:w24} that:
%$$w_{4}^{2}(M)\rho_{2}(h)=w_{2}(M)w_{4}^{2}(M)=0,$$
%and 
%$$\mathrm{Sq}^{2}(z_{h})w_{4}(M)=\mathrm{Sq}^{2}(h^{2})w_{4}(M)=0.$$
%Hence the equation \eqref{eq:sacs} is always true.
%
%These complete the proof.
%\end{proof}

\noindent
\thanks{\textbf{Acknowledgment.} The author would like to thank the University of Melbourne where parts of this work to be carried out and also Diarmuid Crowley for his hospitality. }

%\begin{acknowledgement}  
%The author is partially supported by the National Natural Science Foundation of China (Grant No. 11301145).
%\end{acknowledgement}
% {sec:proof} 
%%%%%%%%%%%%%%%%%%%%%%%%%%%%%%%%%%%%%%%%%%%%%%%

%%%%%%%%%%%%%%%%%%%%%%%%%%%%%%%%%%%%%%%%%%%%%%%

% {sec:fs} 
%%%%%%%%%%%%%%%%%%%%%%%%%%%%%%%%%%%%%%%%%%%%%%%

%\bibliography{map}
%\bibliographystyle{alpha}

%\begin{thebibliography}{999}
%%%%%%%%%%%%%%%%%%%%%%%%%%%%%%%%%%%%%%%%%%%%%%%%%%%

%\end{thebibliography}

%\bibliographystyle{amsinitial}
%\bibliography{...}

%\bibliographystyle{amsplain}
%\bibliography{Young}

%\section{Introduction}
%\label{s:intro}
%%%%%%%%%%%%%%%%%%%%%%%%%%%%%%%%%%%%%%%%%%%%%%%%%%%%%%%%%

%{s:intro}
%%%%%%%%%%%%%%%%%%%%%%%%%%%%%%%%%%%%%%%%%%%%%%%%%%%%%%%%%%

%\section{...}
%\label{s:...}
%%%%%%%%%%%%%%%%%%%%%%%%%%%%%%%%%%%%%%%%%%%%%%%%%%%%%%%%%

%{s:...}
%%%%%%%%%%%%%%%%%%%%%%%%%%%%%%%%%%%%%%%%%%%%%%%%%%%%%%%%%%

%%%%%%%%%%%%%%%%%%%%%%%%%%%%%%%%%%%%%%%%%%%%%%%%%%%%%
%    Text of article.

%    Bibliographies can be prepared with BibTeX using amsplain,
%    amsalpha, or (for "historical" overviews) natbib style.
%    Insert the bibliography data here.
%

%\bibliographystyle{amsplain}
%\bibliography{}
%%%%%%%%%%%%%%%%%%%%%%%%%%%%%%%%%%%%%%%%%%%%%%%%%%%%%%

\end{document}